\theoremstyle{definition}
\newtheorem{theorem}{Theorem}[section]
\newtheorem{proposition}{Proposition}
\newtheorem{lemma}{Lemma}
\newtheorem{remark}{Remark}
\title{The word problem for some classes of one relation Adian inverse semigroups}
\author{Muhammad Inam}
\email{muhammad.inam@aamu.edu}
\address{Department of Physics, Chemistry \& Mathematics\\
Alabama A\&M University\\
Huntsville, Alabama 35811 USA}
\begin{document}
\date{\today}

\begin{abstract} We show that if the Sch\"{u}tzenberger graph of every positive word, that contains an $R$-word only once as it's subword, is finite over an Adain presentation $\langle X|u=v\rangle$, then the Sch\"{u}tzenberger graph of every positive word is finite over the presentation $\langle X|u=v\rangle$. This enable us to solve the word problem ffor some classes of one relation Adian inverse semigroups.

\end{abstract}

\maketitle

\section{Introduction}

\bigskip

Throughout this paper $X$ is considered to be a non-empty set that denotes an alphabet. The set $R=\{ (u_i,v_i)|i\in I\}$, where $u_i,w_i\in X^+$, is called the set of \textit{positive relations}. The pair $\langle X|R\rangle$ is called a \textit{positive presentation}. The  semigroup generated by the set $X$ and having a set $R$ of relations, is denoted by $Sg\langle X|R\rangle$ and the group generated by the set $X$ and having a set $R$ of relations is denoted by $Gp\langle X|R\rangle$. There exists a natural homomorphism $\phi$:$Sg\langle X|R\rangle$ $\to$ $Gp\langle X|R\rangle$. 

We can construct two undirected graphs corresponding to a positive presentation. The \textit{left graph} of the presentation $\langle X|R\rangle$ is  denoted by $LG\langle X|R\rangle$. The vertices of $LG\langle X|R\rangle$ are labeled by the elements of $X$, and there is an edge corresponding to every relation $(u_i,v_i)\in R$, that connects the prefix letters of $u_i$ and $v_i$ together. Similarly, the \textit{right graph} of the presentation $\langle X|R\rangle$ is denoted by $RG\langle X|R\rangle$, and it can be obtained by connecting the suffix letters of $u_i$ and $v_i$ together, for every $(u_i,v_i)\in R$.  A closed path in  $LG\langle X|R\rangle$ is called a \textit{left cycle} and a closed path in $RG\langle X|R\rangle$ is called a \textit{right cycle}. If for some presentation $\langle X|R\rangle$, there is no closed path (cycle) in $LG\langle X|R\rangle$ and $RG\langle X|R\rangle$, then the presentation is called a \textit{cycle free} presentation. The cycle free presentations are also called the Adian presentations,  because these presentations were first studied by S. I. Adian \cite{Adian}. 

A semigroup $S$ is called an \textit{inverse semigroup} if for every element $a\in S$ there exists a unique element $b\in S$ such that $aba=a$ and $bab=b$.  The Unique element $b$ is denoted by $a^{-1}$. The idempotents commute in an inverse semigrpoup, and the product of two idempotents is an idempotent. The \textit{natural partial order} on an inverse semigroup $S$ is defined as $a\leq b$ if and only if $aa^{-1}b=a$, for some $a,b\in S$.  A congruence relation $\sigma$ is defined on $S$, for $a,b\in S$, by $a\sigma b$ if and only if there exists an element $c\in S$ such that $c\leq a,b$. It turns out that $\sigma$ is the minimum group congruence relation on $S$, i.e., $S/\sigma$ is the maximum group homomorphic image of $S$. An inverse semigroups can also be presented by a set of generators and a set of relations, just like groups and semigroups. If $S=Inv\langle X|R\rangle$, then $S/\sigma$ is the group $Gp\langle X|R\rangle$. The set of idempotents of $S$ is denoted by $E(S)=\{e\in S: e^2=e\}$.  An inverse seimigroup is called \textit{E-unitary} if $\sigma^{-1}=E(S)$. All these facts about the inverse semigroups along with more details can be found in the text \cite{Lawson}.

 J. B. Stephen \cite{ST} introduced the notion of \textit{Sch\"{u}tzenberger graphs} to solve the word problem for inverse semigroups. If $M=Inv\langle X|R\rangle$ is an inverse semigroup then we may consider the corresponding Cayley graph $\Gamma(M,X)$. The vertices of this graph are labeled by the elements of $M$ and there exists a directed edge labeled by $x\in X\cup X^{-1}$ from the vertex labeled by $m_1$ to the vertex labeled by $m_2$ if $m_2 = m_1x$.   The Cayley graph $\Gamma(M,X)$ is not necessarily strongly connected, unless $M$ happens to be a group, because it may happen that when there is an edge labeled by $x$ from $m_1$ to $m_2$ there is not an edge labeled by $x^{-1}$ from $m_2$ to $m_1$ (so, $m_2 = m_1x$, but $m_1 \neq m_2x^{-1}$). The strongly connected components of $\Gamma(M,X)$ are called the \textit{Sch\"{u}tzenberger graphs} of $M$. For any word $u\in (X\cup X^{-1})^*$ the strongly connected component of $\Gamma(M,X)$ that contains the vertex labeled by $u$ is the \textit{Sch\"{u}tzenberger graph of $u$} and it is denoted by $S\Gamma(M,X,u)$. In \cite{ST} it is shown that the vertices of $S\Gamma(M,X,u)$ are precisely those vertices that are labeled by the elements of the $\mathscr{R}$-class of $u$, i.e., $R_u = \{m\in M \mid mm^{-1} = uu^{-1}\}$.


 For any word $u\in (X\cup X^{-1})^*$, it is useful to consider the \textit{Sch\"{u}tzenberger automaton} $(uu^{-1},S\Gamma(M,X,u),u)$ with initial vertex $uu^{-1}\in M$, terminal vertex $u\in M$ and with the Sch\"{u}tzenberger graph of $u$ as the underlying graph of the Sch\"{u}tzenberger automaton of $u$. The language accepted by this automaton is a subset of $(X\cup X^{-1})^*$ and will be denoted as $L(u)$.
 \[ L(u) = \{w\in (X\cup X^{-1})^* \mid w \mbox{ labels a path  from } uu^{-1} \mbox{ to } u \mbox{ in } S\Gamma(M,X,u)\}. \]
Here, $u$ and $w$ may be regarded both as elements of $(X\cup X^{-1})^*$ and as elements of $M$. Thus, $L(u)$ may be regarded as a subset of $(X\cup X^{-1})^*$ or as a subset of $M$.

The following result of Stephen \cite{ST} plays a key role in solving the word problem for inverse semigroups.
\begin{theorem}\label{Stephen's thm}
Let $M=Inv\langle X|R\rangle$ and let $u,v\in (X\cup X^{-1})^*$. 
\begin{enumerate}
\item $L(u)= \{w \mid w \geq u$ in the natural partial order on $M\}$.
\item The following are equivalent:
\begin{enumerate}
\item[(i)] $u = v$ in $M$.
\item[(ii)] $L(u)=L(v)$.
\item[(iii)] $u\in L(v)$ and $v\in L(u)$.
\item[(iv)] $(uu^{-1},S\Gamma(M,X,u), u)$ and $(vv^{-1},S\Gamma(M,X,v),v)$ are isomorphic as automata.
\end{enumerate}
\end{enumerate}

\end{theorem}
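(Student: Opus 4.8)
The content of the theorem is Part~(1); every equivalence in Part~(2) then follows formally from it together with antisymmetry of the natural partial order, so I would dispose of~(1) first. The concrete target is the identity $L(u)=\{\,w\in (X\cup X^{-1})^{*}\mid uu^{-1}w=u \text{ in } M\,\}$, after which one only has to observe that, by the definition of $\leq$ on $M$, the condition $uu^{-1}w=u$ says exactly $u\leq w$, i.e. $w\geq u$; so that identity \emph{is} Part~(1).

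For the inclusion $L(u)\subseteq\{w\mid w\geq u\}$ I would read a path off edge by edge. If $w=x_{1}x_{2}\cdots x_{n}$ with each $x_{i}\in X\cup X^{-1}$ labels a path from $uu^{-1}$ to $u$ in $S\Gamma(M,X,u)$, then by the definition of the edges of the Cayley graph the successive vertices of that path are $uu^{-1},\ uu^{-1}x_{1},\ uu^{-1}x_{1}x_{2},\ \dots,\ uu^{-1}x_{1}\cdots x_{n}$, and the last of these equals the terminal vertex $u$; hence $uu^{-1}w=u$ in $M$, that is, $w\geq u$.

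The reverse inclusion is the only place where there is anything to check, and it reduces to a short computation with idempotents. Given $w=x_{1}\cdots x_{n}$ with $uu^{-1}w=u$, set $v_{0}=uu^{-1}$ and $v_{i}=uu^{-1}x_{1}\cdots x_{i}$, so that $v_{n}=u$ and $v_{i}=v_{i-1}x_{i}$; this last identity means each consecutive pair of the $v_{i}$ is joined by an edge of $\Gamma(M,X)$, so $w$ at least labels a path in $\Gamma(M,X)$ from $uu^{-1}$ to $u$. What remains is to show this path stays inside the strongly connected component $S\Gamma(M,X,u)$, i.e. that $v_{i}\in R_{u}$, equivalently $v_{i}v_{i}^{-1}=uu^{-1}$, for each $i$. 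Since idempotents commute in $M$, with $e_{i}=(x_{1}\cdots x_{i})(x_{1}\cdots x_{i})^{-1}$ one gets $v_{i}v_{i}^{-1}=uu^{-1}e_{i}uu^{-1}=uu^{-1}e_{i}\leq uu^{-1}$; conversely, from $u=v_{i}(x_{i+1}\cdots x_{n})$ one gets $uu^{-1}=v_{i}f_{i}v_{i}^{-1}$ with $f_{i}=(x_{i+1}\cdots x_{n})(x_{i+1}\cdots x_{n})^{-1}$ idempotent, and a second use of commutativity of idempotents yields $uu^{-1}\cdot v_{i}v_{i}^{-1}=uu^{-1}$, i.e. $uu^{-1}\leq v_{i}v_{i}^{-1}$. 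Antisymmetry of $\leq$ forces $v_{i}v_{i}^{-1}=uu^{-1}$, so the path lies in $S\Gamma(M,X,u)$ and $w\in L(u)$. This finishes~(1).

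Part~(2) is now bookkeeping. (i)$\Rightarrow$(ii): if $u=v$ in $M$ then $\{w\mid w\geq u\}=\{w\mid w\geq v\}$, so $L(u)=L(v)$ by~(1). (ii)$\Rightarrow$(iii): since $u\geq u$ we have $u\in L(u)=L(v)$, and likewise $v\in L(v)=L(u)$. (iii)$\Rightarrow$(i): by~(1), $u\in L(v)$ and $v\in L(u)$ mean $u\geq v$ and $v\geq u$, so $u=v$ by antisymmetry. (i)$\Rightarrow$(iv): $u=v$ in $M$ gives $uu^{-1}=vv^{-1}$ and $R_{u}=R_{v}$, so the two pointed automata share their underlying graph and their initial and terminal vertices and are thus isomorphic. (iv)$\Rightarrow$(ii): an isomorphism of pointed automata preserves the accepted language. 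The one genuine obstacle is the reverse inclusion in~(1) --- ruling out that the candidate path built from a word $w$ with $uu^{-1}w=u$ ever leaves the strongly connected component of $uu^{-1}$ --- and the idempotent computation above is exactly what settles it. Were that step to prove awkward, I would instead follow Stephen's original route: realize $S\Gamma(M,X,u)$ as the direct limit of the iteration that starts from the linear automaton of $u$ and alternately applies Stallings foldings and $R$-expansions, and identify $L(u)$ with the language of that limit.
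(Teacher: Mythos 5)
Your argument is correct: part (1) is exactly Stephen's characterization, the forward inclusion by reading the path vertex by vertex and the reverse inclusion by the idempotent computation showing $v_iv_i^{-1}=uu^{-1}$ (hence that the candidate path never leaves the $\mathscr{R}$-class of $u$), and part (2) then follows by the routine cycle of implications you give. Note that the paper itself offers no proof of this statement --- it is quoted as background from Stephen's paper \cite{ST} --- and your proof is essentially the standard one found there, so there is nothing to compare beyond observing that you have correctly reconstructed it.
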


 We briefly describe the iterative procedure described by Stephen \cite{ST} for building a Sch\"{u}tzenberger graph. Let $Inv\langle X|R\rangle$  be a presentation of an inverse monoid.

   Given a word $u=a_1a_2...a_n\in (X\cup X^{-1})^*$, the \textit{linear graph} of $u$  is the birooted inverse word graph $(\alpha_u,\Gamma_u,\beta_u)$ consisting of the set of vertices

   \begin{center}

   $V((\alpha_u,\Gamma_u,\beta_u))=\{\alpha_u,\beta_u,\gamma_1,...,\gamma_{n-1}\}$

   \end{center}

   and edges

   \begin{center}

   $(\alpha _u,a_1, \gamma _1),(\gamma _1,a_2,\gamma _2),..., (\gamma _{n-2},a_{n-1},\gamma _{n-1}),(\gamma _{n-1},a_n,\beta _u)$,

   \end{center}

    together with the corresponding inverse edges.

    Let $(\alpha , \Gamma ,\beta )$ be a birooted inverse word graph over $X\cup X^{-1}$. The following operations may be used to obtain a new birooted inverse word graph $(\alpha ',\Gamma ',\beta ')$:

    $\bullet$ \textbf{Determination} or \textbf{folding:} Let $(\alpha,\Gamma,\beta)$ be a birooted inverse word graph with vertices $v,v_1,v_2$, with $v_1\neq v_2$, and edges $(v,x,v_1)$ and $(v,x,v_2)$ for some $x\in X\cup X^{-1}$.

    Then we obtain a new birooted inverse word graph $(\alpha',\Gamma',\beta')$ by taking the quotient of $(\alpha,\Gamma,\beta)$ by the equivalence relation that identifies the vertices $v_1$ and $v_2$ and the two edges. In other words, edges with the same label coming out of a vertex are folded together to become one edge.

    $\bullet$ \textbf{Elementary $\mathscr{P}$-expansion:} Let $r=s$ be a relation in $R$ and suppose that $r$ can be read from $v_1$ to $v_2$ in $\Gamma$, but $s$ cannot be read from $v_1$ to $v_2$ in $\Gamma$. Then we define $(\alpha',\Gamma',\beta')$ to be the quotient of $\Gamma \cup (\alpha _s,\Gamma_s,\beta_s)$ by the equivalence relation that identifies vertices $v_1$ and $\alpha_s$ and vertices $v_2$ and $\beta_s$. In other words. we  ``sew" on a linear graph for $s$ from $v_1$ to $v_2$ to complete the other half of the relation $r=s$.

    An inverse word graph is \textit{deterministic} if no folding can be performed and \textit{closed} if it is deterministic and no elementary expansion can be performed over a presentation $\langle X|R\rangle$. Note that given a finite inverse word graph it is always possible to produce a determinized form of the graph, because determination reduces the number of vertices. So, the process of determination must stop after finitely many steps, We note also that the process of folding is confluent \cite{SG} .

    If $(\alpha_1,\Gamma_1, \beta_1)$ is obtained from $(\alpha,\Gamma,\beta)$ by an elementary $\mathscr{P}$-expansion, and $(\alpha_2,\Gamma_2,\beta_2)$ is the determinized  form of $(\alpha_1,\Gamma_1,\beta_1)$, then we write $(\alpha,\Gamma,\beta)$\\$\Rightarrow (\alpha_2,\Gamma_2,\beta_2)$ and say that $(\alpha_2,\Gamma_2,\beta_2)$ is obtained from $(\alpha, \Gamma,\beta)$ by a  \textit{$\mathscr{P}$-expansion}. The reflexive and transitive closure of $\Rightarrow$ is denoted by $\Rightarrow ^*$.

    For $u\in (X\cup X^{-1})^*$, an \textit{approximate graph} of $(uu^{-1}, S\Gamma(u), u)$ is a birooted inverse word graph $A=(\alpha,\Gamma,\beta)$ such that $u\in L[A]$ and $y\geq u$ holds in $M$ for all $y\in L[A]$. Stephen showed in \cite{ST} that the linear graph of $u$ is an approximate graph of $(uu^{-1}, S\Gamma(u), u)$. He also proved the following:

    \begin{theorem}\label{closure}
    Let $u\in (X\cup X^{-1})^*$ and let $(\alpha,\Gamma,\beta)$ be an approximate graph of $(uu^{-1},S\Gamma(u), u)$. If $(\alpha,\Gamma,\beta)\Rightarrow^*(\alpha',\Gamma',\beta')$ and $(\alpha',\Gamma',\beta')$ is closed, then $(\alpha',\Gamma',\beta')$ is the Sch\"{u}tzenberger automaton for $u$.
    \end{theorem}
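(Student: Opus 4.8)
The plan is to show that the closed birooted inverse word graph $(\alpha',\Gamma',\beta')$ is isomorphic, as a birooted inverse word graph, to $(uu^{-1},S\Gamma(u),u)$; by Theorem~\ref{Stephen's thm} that is precisely what it means for it to be the Sch\"{u}tzenberger automaton for $u$. I would proceed in three stages: (i) prove that a $\mathscr{P}$-expansion sends an approximate graph of $(uu^{-1},S\Gamma(u),u)$ to another one, so that $(\alpha',\Gamma',\beta')$ is itself a \emph{closed} approximate graph of $u$; (ii) attach to every approximate graph of $u$ a canonical root- and label-preserving morphism into $(uu^{-1},S\Gamma(u),u)$; (iii) use closedness of both graphs to promote the relevant morphisms to mutually inverse isomorphisms.

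For (i) I would argue as follows. A single folding or elementary $\mathscr{P}$-expansion only creates new walks between the two roots, so if $(\alpha,\Gamma,\beta)\Rightarrow(\alpha_1,\Gamma_1,\beta_1)$ then $L[(\alpha,\Gamma,\beta)]\subseteq L[(\alpha_1,\Gamma_1,\beta_1)]$; in particular $u$ is still accepted. Conversely, given a word $y$ read between the roots of the new graph, I would reroute it through the old graph — replacing each traversal of the newly sewn-on linear graph for $s$ by the already-present path labeled $r$ (legitimate since $r=s$ in $M$) and ignoring coincidences created by folding — to obtain a word $y'$ with $y'=y$ in $M$ that is read between the roots of the old graph; then $y'\ge u$, hence $y=y'\ge u$. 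Induction on the length of a $\Rightarrow^{*}$-sequence gives that $(\alpha',\Gamma',\beta')$ is an approximate graph, i.e. $u\in L[(\alpha',\Gamma',\beta')]\subseteq L(u)$.

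For (ii) and (iii): to an approximate graph $(\alpha,\Gamma,\beta)$ I would assign the map sending a vertex $p$ to $uu^{-1}w\in M$, where $w$ labels any path $\alpha\to p$; using that any path $\alpha\to p\to\beta$ reads some $w_1w_2\ge u$, so that $uu^{-1}w_1w_2w_2^{-1}w_1^{-1}=uu^{-1}$ forces $uu^{-1}w_1w_1^{-1}=uu^{-1}$, one checks this is independent of the chosen path and lands in $R_u$, the vertex set of $S\Gamma(u)$; matching edges yields the unique root- and label-preserving morphism $\eta\colon(\alpha,\Gamma,\beta)\to(uu^{-1},S\Gamma(u),u)$. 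In particular there is such an $\eta$ out of $(\alpha',\Gamma',\beta')$. For the reverse map: since $u\in L[(\alpha',\Gamma',\beta')]$, the word $u$ labels a walk between the roots of $\Gamma'$, which is a morphism from the linear graph $(\alpha_u,\Gamma_u,\beta_u)$ of $u$ into $(\alpha',\Gamma',\beta')$; Stephen's construction realizes $(uu^{-1},S\Gamma(u),u)$ as a closed graph obtained from $(\alpha_u,\Gamma_u,\beta_u)$ by $\Rightarrow^{*}$, and since any elementary $\mathscr{P}$-expansion or folding performed on the source of a morphism whose target is closed is ``absorbed'' by the target, this morphism extends along $\Rightarrow^{*}$ to a morphism $\nu\colon(uu^{-1},S\Gamma(u),u)\to(\alpha',\Gamma',\beta')$. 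Finally, because $S\Gamma(u)$ is strongly connected, every root-fixing endomorphism of it is the identity (a vertex $uu^{-1}w$ is forced to the end of the $w$-path from $uu^{-1}$), and the same holds for the connected graph $(\alpha',\Gamma',\beta')$; hence $\eta\nu$ and $\nu\eta$ are identities, $\eta$ and $\nu$ are mutually inverse isomorphisms, and $(\alpha',\Gamma',\beta')$ is the Sch\"{u}tzenberger automaton for $u$.

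The hard part will be the ``absorption'' step in (iii): converting the \emph{local} hypothesis ``no folding and no elementary $\mathscr{P}$-expansion applies to $(\alpha',\Gamma',\beta')$'' into the \emph{global} morphism $\nu$ out of the (possibly transfinitely constructed) graph $S\Gamma(u)$. This needs the confluence of folding (cited in the excerpt) together with the compatibility of $\mathscr{P}$-expansions with morphisms into closed graphs, plus some care that the direct-limit description of $S\Gamma(u)$ behaves well under these operations. Steps (i) and (ii), by contrast, should be routine inverse-semigroup bookkeeping.
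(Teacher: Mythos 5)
This theorem is not proved in the paper at all: it is quoted from Stephen \cite{ST}, so there is no in-house argument to compare yours with; the natural benchmark is Stephen's original proof, and your outline is essentially a reconstruction of it (approximate graphs are stable under $\Rightarrow$, every approximate graph maps canonically into the Sch\"{u}tzenberger automaton, and closedness supplies the reverse morphism). Two remarks on the details. In (i), rerouting a \emph{partial} traversal of the sewn-on linear graph of $s$ gives a word $y'$ with $y'\le y$ in $M$, not $y'=y$: a segment of the path that enters the new linear graph at $v_1$ and exits again at $v_1$ reads $s_1s_1^{-1}$ for some prefix $s_1$ of $s$, which lies above $ss^{-1}=rr^{-1}$, and you replace it by $rr^{-1}$; the inequality points the right way, so $y\ge y'\ge u$ survives, but the equality claim should be corrected. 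In (ii), the well-definedness you assert does hold, and the missing line is short: if $w,w'$ label paths $\alpha\to p$ and $w_2$ labels a path $p\to\beta$, then $u=uu^{-1}ww_2=uu^{-1}w'w_2$; multiplying on the right by $w_2^{-1}$ and using $uu^{-1}ww_2w_2^{-1}=uu^{-1}w$ (which follows from $uu^{-1}(ww_2)(ww_2)^{-1}=uu^{-1}$ together with $ww^{-1}w=w$ and commuting idempotents) gives $uu^{-1}w=uw_2^{-1}=uu^{-1}w'$.

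The one genuine gap is the one you flag yourself: the construction of $\nu$. As written, (iii) is a plan, not a proof; it needs the extension lemma (a morphism $B\to C$ with $C$ closed and deterministic extends along any single elementary $\mathscr{P}$-expansion or folding of $B$ --- closedness of $C$ handles the former, determinism the latter) followed by a passage to the direct limit over the directed system of $\Rightarrow^{*}$-descendants of the linear graph of $u$, checking that the extended morphisms form a cocone. That is exactly how Stephen carries it out, so the plan is sound, but until it is executed the proof is incomplete. You could also bypass $\nu$ entirely: once (i) gives $u\in L[(\alpha',\Gamma',\beta')]\subseteq L(u)$, closedness yields the reverse inclusion $L(u)\subseteq L[(\alpha',\Gamma',\beta')]$ (a closed deterministic inverse word graph that reads $u$ between its roots reads every word $\ge u$ there, since passing from $u$ to such a word is a finite sequence of insertions of factors $zz^{-1}$ and replacements of one side of a relation by the other, each of which the graph absorbs), so $L[(\alpha',\Gamma',\beta')]=L(u)$, and the theorem then follows from the uniqueness, up to isomorphism of birooted graphs, of the trim deterministic and co-deterministic automaton recognizing a given language.
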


    In \cite{ST}, Stephen showed that the class of all birooted inverse words graphs over $X\cup X^{-1}$ is a co-complete category  and that the directed system of all finite $\mathscr{P}$-expansions of a linear graph of $u$ has a direct limit. Since the directed system includes all possible $\mathscr{P}$-expansions, this limit must be closed. Therefore, by \ref{closure}, the Sch\"{u}tzenberger graph of $u$ is the direct limit.

    \textbf{Full $\mathscr{P}$- expansion (a generalization of the concept of $\mathscr{P}$-\\ expansion):} A full $\mathscr{P}$-expansion of a birooted inverse word graph $(\alpha,\Gamma,\beta)$ is obtained in the following way:

    $\bullet$ Form the graph $(\alpha',\Gamma',\beta')$, which is obtained from $(\alpha,\Gamma,\beta)$ by performing all possible elementary $\mathscr{P}$-expansions of $(\alpha,\Gamma,\beta)$, relative to $(\alpha,\Gamma,\beta)$. We emphasize  that an elementary $\mathscr{P}$-expansion may introduce a path labeled by one side of relation in $R$, but we do not perform an elementary $\mathscr{P}$-expansion that could not be done to $(\alpha,\Gamma,\beta)$ when we do a full $\mathscr{P}$-expansion.

    $\bullet$ Find the determinized form $(\alpha_1,\Gamma_1,\beta_1)$, of $(\alpha',\Gamma',\beta')$.

    The birooted inverse word graph $(\alpha_1,\Gamma_1,\beta_1)$ is called the full $\mathscr{P}$-expansion of $(\alpha,\Gamma,\beta)$. We denote this relationship by $(\alpha,\Gamma,\beta)\Rightarrow_f (\alpha_1,\Gamma_1,\beta_1)$.  If $(\alpha_n,\Gamma_n,\beta_n)$ is obtained from $(\alpha,\Gamma ,\beta)$ by a sequence of full $\mathscr{P}$-expansions then we denote this by   $(\alpha,\Gamma ,\beta)\Rightarrow^*_f(\alpha_n,\Gamma_n,\beta_n)$.





\section{The Word Problem For Adian Inverse Semigroups}

Adian conjectured \cite{Adian} that the word problem is decidable for Adain semigroups. The following result was first proved by Adian \cite{Adian} for only finite Adian presentations. Latter Remmers \cite{Remmers} proved the same result, by using geometric techniques, for any Adian presentation. 

\begin{theorem} An Adain semigroup embeds in the Adian group with the same presentation. 

\end{theorem}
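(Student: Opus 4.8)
The plan is to prove the theorem by showing that the natural homomorphism $\phi\colon S\to G$, with $S=Sg\langle X\mid R\rangle$ and $G=Gp\langle X\mid R\rangle$, is injective. Since every element of $S$ is represented by a nonempty positive word, this reduces to the following: if $w_1,w_2\in X^{+}$ and $w_1=w_2$ in $G$, then $w_1=w_2$ in $S$. This is in essence Remmers' geometric argument. Write $R=\{(u_i,v_i)\}$ and present $G$ by the relators $u_iv_i^{-1}$. By van Kampen's lemma, $w_1=w_2$ in $G$ is witnessed by a reduced van Kampen diagram $D$ over this group presentation (for instance one of minimal area) whose boundary cycle reads $w_1w_2^{-1}$. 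I would orient every edge of $D$ in the direction in which it spells a letter of $X$. Because $w_1$ and $w_2$ are positive, the boundary of $D$ decomposes as a directed path $\pi_1$ labeled $w_1$ from a vertex $O$ to a vertex $T$, followed by a directed path $\pi_2$ labeled $w_2$ from $O$ to $T$; thus $O$ is a source and $T$ a sink of $D$, while every other boundary vertex has both an incoming and an outgoing boundary edge.

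The crux, and the only place where the Adian hypothesis is used, is the structural claim that a reduced van Kampen diagram over a cycle-free presentation has no directed cycle, no interior source vertex, and no interior sink vertex. I would prove the source clause by a local analysis: if $p$ were an interior source, list the edges leaving $p$ in cyclic planar order as $e_1,\dots,e_k$; the $2$-cell $C_j$ filling the corner at $p$ between $e_j$ and $e_{j+1}$ carries a relation $u_{i_j}=v_{i_j}$, and since $p$ emits both sides of $C_j$, the labels of $e_j$ and $e_{j+1}$ are exactly the initial letters of $u_{i_j}$ and $v_{i_j}$, which are distinct because the left graph of an Adian presentation has no loop. Reading $j$ cyclically, the labels of $e_1,\dots,e_k$ therefore trace a closed walk in $LG\langle X\mid R\rangle$ with no backtracking step, which hence contains a left cycle and contradicts cycle-freeness. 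The sink clause is the mirror statement using $RG\langle X\mid R\rangle$, and the absence of directed cycles follows by applying the source and sink clauses to the subdiagram cut off by an innermost directed cycle, which would otherwise be a nonempty planar directed diagram with no source and no sink.

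Granting the claim, $D$ is a finite planar acyclic digraph whose only source is $O$ and whose only sink is $T$; consequently every edge of $D$ lies on a directed path from $O$ to $T$, and every directed path from $O$ to $T$ is labeled by a positive word. I would then conclude with the standard planar sweep. The directed paths from $O$ to $T$ in $D$, ordered by the region each cuts off against $\pi_1$, form a lattice with greatest element $\pi_1$ and least element $\pi_2$, and moving from one path to a covering path in this lattice pushes it across a single $2$-cell, which replaces an occurrence of one side of that cell's relation by the other in the word being read. This yields a chain $w_1=p_0\to p_1\to\cdots\to p_m=w_2$ of elementary positive transformations, so $w_1=w_2$ in $S$; hence $\phi$ is injective and $S$ embeds in $G$.

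The main obstacle I anticipate is the structural claim of the second paragraph, and within it two delicate points: ruling out directed cycles, and fixing the precise notion of ``reduced'' (together, if necessary, with the diagram being folded at interior vertices) under which the left-graph and right-graph walks produced above are genuinely free of backtracking, so that the degenerate configuration of two cells carrying the same relation around a bivalent source is excluded. By contrast, the reduction to injectivity on $X^{+}$ and the closing sweep argument are routine once the directed structure of $D$ is under control.
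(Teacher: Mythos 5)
The paper does not actually prove this statement: it is quoted as a known theorem, due to Adian for finite presentations and to Remmers in general, so there is no in-paper proof to compare against. What you have written is, in outline, Remmers' geometric argument, and the outline is sound. The reduction to injectivity of the natural map on $X^{+}$, the corner analysis at an interior source producing a closed walk in $LG\langle X|R\rangle$ (using that cycle-freeness forbids loops, so the two sides of a relation begin with distinct letters), the observation that a backtrack in that walk forces two cells carrying the same relation to form a cancellable pair (so that reducedness of $D$ removes it), and the mirror argument in $RG\langle X|R\rangle$ for sinks, are exactly the right mechanisms, and you have correctly located the delicate points.

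Two steps still need more than you have written. First, acyclicity: ``a nonempty planar directed diagram with no source and no sink'' is not by itself a contradiction (any directed cycle is such an object), so invoking the source/sink clauses on the subdiagram enclosed by an innermost directed cycle does not immediately finish; you must extract from that subdiagram either a strictly smaller directed cycle or a genuine interior source or sink, e.g.\ by following out-edges from an interior vertex and using minimality of the enclosed area, or by adopting Remmers' formulation via semigroup derivation diagrams, where the lemma proved is that every interior vertex lies on a simple transversal from $O$ to $T$. Second, the closing sweep: to push the upper boundary path across one cell you need a cell one of whose sides lies \emph{entirely} on the current path; this existence statement is again a consequence of the source/sink lemma applied to the subdiagram between the current path and $\pi_2$, but it is the combinatorial heart of the argument and should be argued explicitly rather than asserted through the lattice of transversals. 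Neither point is a wrong turn --- both are standard and fixable --- but as written they are the places where your proof is still a sketch rather than a proof.
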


Magnus \cite{Lyndon} proved that the word problem is decidable for one relator groups. Since Adian semigroups embed in Adian groups, therefore, it follows that the word problem is decidable for one relation Adian semigroups as well. But, the question, whether the word problem is decidable for one relation Adain inverse semigroups or not, has not been answered yet. 

The following theorem is proved in \cite{E-unitary}
\begin{theorem}\label{E-unitary} Adian inverse semigroups are $E$-unitary.
\end{theorem}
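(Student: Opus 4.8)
The plan is to recast $E$-unitarity in terms of Sch\"utzenberger graphs and the canonical projection to the Cayley graph of the maximal group image, and then to import the combinatorics that Remmers used for the embedding theorem above. Write $S = Inv\langle X|R\rangle$ for an Adian inverse semigroup, $G = Gp\langle X|R\rangle$ for the group with the same presentation, and $\phi$ for the natural homomorphism; recall from the introduction that $G \cong S/\sigma$. A word $w \in (X\cup X^{-1})^{*}$ represents an element of $S$ that is $\sigma$-related to an idempotent exactly when $\phi(w) = 1$ in $G$ (idempotents map to $1$, and if $\phi(w)=1$ then $w\,\sigma\,ww^{-1}$). Hence $S$ is $E$-unitary if and only if every word $w$ with $\phi(w)=1$ already represents an idempotent of $S$, i.e.\ $w = ww^{-1}$ in $S$. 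Since $w$ and $ww^{-1}$ lie in the same $\mathscr{R}$-class and so share the Sch\"utzenberger graph $S\Gamma(M,X,w)$, Theorem~\ref{Stephen's thm} reduces the problem to showing that the initial and terminal roots of the Sch\"utzenberger automaton $(ww^{-1}, S\Gamma(M,X,w), w)$ coincide whenever $\phi(w)=1$.

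Next I would introduce the label-preserving graph morphism $\theta \colon S\Gamma(M,X,w) \to \Gamma(G,X)$ sending a vertex $m$ to $\phi(m)$; it is well defined because $S\Gamma(M,X,w)$ is built from linear graphs of $w$ and of the relators of $R$ by foldings and elementary $\mathscr{P}$-expansions, each of which is visible in the Cayley graph $\Gamma(G,X)$. If $\phi(w)=1$ then both roots of the automaton are carried to the identity vertex, so it suffices to show that $\theta$ is injective on vertices --- equivalently, that every Sch\"utzenberger graph over an Adian presentation embeds into the Cayley graph of $G$. One cannot hope to prove this by merely tracking injectivity through Stephen's iterative construction, since the linear graph of $w$ already has $\theta$ identifying its two roots when $\phi(w)=1$; the cycle-free hypothesis must instead be used globally, and it is essential, as the embedding property fails for general one-relation inverse semigroups.

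To prove the injectivity of $\theta$ I would argue by contradiction via reduced diagrams. Suppose $\theta(p)=\theta(q)$ for distinct vertices $p,q$ of some $S\Gamma(M,X,w)$. Since this graph is strongly connected, there is a path from $p$ to $q$ labeled by a word $z$, and then $\phi(z)=1$ in $G$. Reading off the sub-part of $S\Gamma(M,X,w)$ carrying this path together with the relator linear graphs sewn on by the $\mathscr{P}$-expansions that produced it yields a connected semigroup diagram over $\langle X|R\rangle$ whose boundary is governed by $z$; after passing to a reduced diagram, the absence of left cycles in $LG\langle X|R\rangle$ and of right cycles in $RG\langle X|R\rangle$ forbids precisely the adjacent-cell configurations that a nontrivial reduced diagram over an Adian presentation would have to contain. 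Hence the diagram collapses and $p=q$, a contradiction. (Alternatively one can realize the relator expansions directly inside $\Gamma(G,X)$ and use Theorem~\ref{closure} to identify the resulting closed subgraph with $S\Gamma(M,X,w)$.) I expect this last step to be the main obstacle: one must line up the $\mathscr{P}$-expansion and folding bookkeeping of Stephen's construction with the correct notion of a reduced diagram, so that Remmers' cycle-free analysis can be applied verbatim. Granting it, $\theta$ is injective on vertices, the two roots of $S\Gamma(M,X,w)$ coincide whenever $\phi(w)=1$, so $w = ww^{-1} \in E(S)$, and $S$ is $E$-unitary.
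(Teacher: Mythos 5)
First, a point of order: the paper does not prove this theorem at all --- it is quoted from \cite{E-unitary} (Inam--Meakin--Ruyle) --- so there is no in-paper argument to measure yours against. Your reductions are sound as far as they go: $E$-unitarity is indeed equivalent to the statement that $\phi(w)=1$ forces $w=ww^{-1}$ in $M$, and this follows from injectivity of the label-preserving morphism $\theta$ from each Sch\"utzenberger graph to the Cayley graph of $G$. But be aware that this reduction buys you essentially nothing: injectivity of $\theta$ on Sch\"utzenberger graphs is itself \emph{equivalent} to $E$-unitarity (if $z\in E(M)$ labels a path from $p$ to $q$ in a Sch\"utzenberger graph, then $q=pz\leq p$ and $q\,\mathscr{R}\,p$, whence $p=q$), so all of the content remains concentrated in the step you yourself flag as ``the main obstacle.''

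That step, as described, has two genuine defects. First, the word $z$ labelling a path between the putatively distinct vertices $p,q$ is an arbitrary word over $X\cup X^{-1}$, so the diagram it bounds is a van Kampen diagram over the group presentation, not a semigroup diagram with positive boundary words; Remmers' cycle-free analysis is carried out for semigroup diagrams and cannot be applied ``verbatim'' here --- extending it to van Kampen diagrams (equivalently, to Sch\"utzenberger graphs, which contain negatively as well as positively labelled transitions) is precisely the nontrivial work done in \cite{E-unitary}. Second, the intended conclusion is false: the cycle-free hypothesis does not forbid nontrivial reduced diagrams. A single cell with boundary label $uv^{-1}$ is already a nontrivial reduced diagram over any Adian presentation (for instance $\langle a,b\mid ab=ba\rangle$ is Adian and $aba^{-1}b^{-1}=1$ in $G$), so no ``collapse'' to $p=q$ occurs by nonexistence of diagrams. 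What the cycle-free condition actually yields is a structural statement --- every nonempty reduced diagram over an Adian presentation contains a cell one entire side of which lies on the boundary of the diagram --- and the proof of $E$-unitarity then proceeds by induction on the number of cells, replacing the occurrence of $u$ (resp.\ $v$) on the boundary word by $v$ (resp.\ $u$), which does not change the value of the word in $M$. Without that structural lemma and the induction it supports, your argument does not close.
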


By using Theorem \ref{E-unitary}, we can make the following remark about one relation Adian inverse semigroups. 

\begin{remark} Let $M=Inv\langle X|u=v\rangle$ be an Adian inverse semigroup. Then the membership problem for the set of idempotents of $M$, $E(M)$, is decidable.
\end{remark}

The following result has been proved in \cite{AD}

\begin{theorem}  Let $Inv\langle X|R\rangle$ be a finitely presented Adian inverse semigrooup. Then the Sch\"{u}tzenberger graph of $w$, for all $w\in (X\cup X^{-1})^+$, is finite if and only if the Sch\"{u}tzenberger graph of $w'$ is finite, for all $w'\in X^+$. 
\end{theorem}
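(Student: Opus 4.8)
The forward implication is immediate, since $X^{+}\subseteq (X\cup X^{-1})^{+}$: if $S\Gamma(w)$ is finite for every $w\in(X\cup X^{-1})^{+}$ then in particular it is finite for every $w'\in X^{+}$. All the work is in the converse, and the plan is to reduce an arbitrary word to positive data in three preliminary steps. First, since the Sch\"utzenberger graph depends only on the $\mathscr R$-class of its root, $S\Gamma(w)=S\Gamma(ww^{-1})$, so it suffices to bound $S\Gamma(e)$ for idempotents $e=ww^{-1}$. Second, by Theorem~\ref{E-unitary} the inverse monoid $M=Inv\langle X|R\rangle$ is $E$-unitary, and a short computation then shows that the canonical map $S\Gamma(w)\to\Gamma(M/\sigma,X)=\Gamma(Gp\langle X|R\rangle,X)$ is injective on vertices; thus $S\Gamma(w)$ is a connected subgraph of the locally finite Cayley graph of $Gp\langle X|R\rangle$, and ``$S\Gamma(w)$ finite'' becomes equivalent to ``the vertex set of $S\Gamma(w)$ is bounded in the word metric''. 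Third --- and this is the feature that makes the reduction possible --- the presentation is \emph{positive}: both sides of every relation of $R$ lie in $X^{+}$. Hence every elementary $\mathscr P$-expansion in Stephen's iterative construction of $S\Gamma(w)$ (Theorem~\ref{closure}) sews on the linear graph of a positive word, so $S\Gamma(w)$ is obtained from the linear graph (the ``spine'', with at most $|w|+1$ vertices) of $w$ by attaching positively labelled inverse word graphs.

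Building on this, I would set up a structural decomposition. Writing $w$ in maximal syllables $w=w_{1}^{\epsilon_{1}}\cdots w_{k}^{\epsilon_{k}}$ with $w_{j}\in X^{+}$, $\epsilon_{j}=\pm1$ alternating and $k\le|w|$, one checks that the positive edges of $S\Gamma(w)$ --- including, for each negative syllable $w_{j}^{-1}$, the positive edges obtained by running that stretch of the spine backwards, which spell $w_{j}$ --- form a ``positive core'' built from $k$ positive paths meeting at the $k-1$ syllable junctions and the two ends, and that all positive $\mathscr P$-expansions only enlarge this core; the only non-positive edges of $S\Gamma(w)$ are the at most $k-1$ negative spine edges between junctions, together with their inverses. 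Therefore $S\Gamma(w)$ is finite if and only if its positive core is. The crux is then the claim that the positive core is a finite union of closed positive inverse word graphs, each of which --- by the theory behind Theorem~\ref{closure} together with the embedding of the Adian semigroup $Sg\langle X|R\rangle$ in $Gp\langle X|R\rangle$ --- is a quotient of, or embeds into, the Sch\"utzenberger graph $S\Gamma(p)$ of a single positive word $p\in X^{+}$ of length bounded in terms of $|w|$ and the relator lengths. Granting the claim the theorem follows: by hypothesis each such $S\Gamma(p)$ is finite, there are boundedly many of them, and the decomposition glues them along boundedly many vertices, so the positive core, hence $S\Gamma(w)$, is finite.

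The main obstacle is exactly this claim. A priori the positive structure grown around one syllable can spill across a reversed negative spine edge into the block of a neighbouring syllable and feed back into itself, so that the positive core --- although locally, around each syllable, it looks like a piece of some $S\Gamma(p)$ --- could spell arbitrarily long positive words and be infinite even when every individual positive Sch\"utzenberger graph is finite. Excluding this feedback is precisely what the cycle-free condition on $LG\langle X|R\rangle$ and $RG\langle X|R\rangle$ buys: a runaway interaction of this type would force a closed path in the left or the right graph. I would make this quantitative by induction on the number $k$ of syllables, peeling off the outermost negative syllable, using the Adian hypothesis to see that the identification it induces between the two adjacent positive blocks is one-directional and so cannot sustain unbounded growth, and then invoking the confluence of folding to reassemble $S\Gamma(w)$ from the finitely many finite pieces. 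Carrying out this induction cleanly --- and in particular pinning down the bound on the positive words $p$ that can appear --- is the technical heart of the argument.
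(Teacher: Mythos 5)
First, a point of reference: the paper you are working from does not actually prove this statement --- it is quoted verbatim from \cite{AD} and used as a black box, so there is no in-paper proof to measure your attempt against. Your proposal therefore has to stand on its own, and as written it does not: the entire argument funnels into the claim that the ``positive core'' of $S\Gamma(w)$ is a finite union of closed positive inverse word graphs, each embedding into $S\Gamma(p)$ for some positive word $p$ of length bounded in terms of $|w|$ and the relator lengths --- and you then say yourself that establishing this claim ``is the technical heart of the argument'' and only gesture at how the cycle-freeness of $LG\langle X|R\rangle$ and $RG\langle X|R\rangle$ would exclude runaway feedback across syllable junctions. That is precisely the step where the Adian hypothesis must be used in an essential, quantitative way (the statement is false for general positive presentations), so leaving it as a plan rather than a proof is a genuine gap, not a routine verification. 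There is also a circularity risk you do not address: to name the positive words $p$ whose Sch\"utzenberger graphs are supposed to cover the core, you must identify them \emph{before} knowing that Stephen's expansion process stabilizes, since the hypothesis only gives you finiteness of $S\Gamma(p)$ for words $p$ you can exhibit in advance.

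Two smaller problems. Your dichotomy between ``positive edges'' and ``negative spine edges'' is not well defined: in an inverse word graph every edge carries both a label $x$ and the inverse label $x^{-1}$ in the opposite direction, so every edge of $S\Gamma(w)$ is a positive edge when read the right way, and the ``positive core'' as you describe it is simply all of $S\Gamma(w)$. What you actually need to track is which positive words label paths through the syllable junctions, not a partition of the edge set. Second, the reduction to idempotents via $S\Gamma(w)=S\Gamma(ww^{-1})$ and the embedding of $S\Gamma(w)$ into the Cayley graph of $Gp\langle X|R\rangle$ (which does follow correctly from Theorem \ref{E-unitary}) are both sound but do no work in the rest of your argument; neither boundedness in the word metric nor the idempotent reformulation is used when you come to the decisive claim. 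The outline is a reasonable way to \emph{begin} thinking about the reduction, but the theorem is not proved until the interaction of adjacent syllables across a reversed segment is controlled by an explicit induction with the left- and right-graph conditions doing identifiable work.
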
 

In this paper, we prove the following result, which enable us to decide the word problem for some classes one relation Adian inverse semigroups.

\begin{theorem} Let $\langle X|u=v\rangle$ be an Adian presentation.  The Sch\"{u}tzenberger graph of every positive word, that contains an $R$-word only once as it's subword, is finite over the presentation $\langle X|u=v\rangle$ if and only if the Sch\"{u}tzenberger graph of every positive word is finite over the presentation $\langle X|u=v\rangle$. 
\end{theorem}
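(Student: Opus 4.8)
The plan is to reduce the finiteness of the Schützenberger graph of an arbitrary positive word $w \in X^+$ to the case of positive words containing an $R$-word at most once, by induction on the number of occurrences of (copies of) $u$ or $v$ that $w$ contains as a subword. The forward implication is trivial, so the content is the converse: assuming finiteness for all positive words with a single $R$-subword, we must deduce it for all positive words. The key structural fact I would lean on is that, over an Adian presentation, the presentation is cycle-free in both the left and right graphs, so $u$ and $v$ cannot overlap themselves or each other in pathological ways — in particular, no positive word can have two occurrences of an $R$-word that overlap (this is where cycle-freeness of $LG$ and $RG$ enters), and so the occurrences of $R$-words in $w$ are disjoint and linearly ordered along $w$.

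The main step is an analysis of how $\mathscr{P}$-expansions propagate when we build $S\Gamma(M,X,w)$ starting from the linear graph $\Gamma_w$. Write $w = w_0 s_1 w_1 s_2 w_2 \cdots s_k w_k$ where each $s_i \in \{u,v\}$ marks an occurrence of an $R$-word and the $w_i$ contain no $R$-subword. I would first build the Schützenberger graphs of the ``single-occurrence'' factors $w_{i-1} s_i w_i$ (or rather suitable subwords of $w$ containing exactly one $R$-occurrence), each of which is finite by hypothesis, and then argue that the Schützenberger graph of $w$ is obtained by gluing these finitely many finite pieces along the linear segments $\Gamma_{w_i}$ and completing with foldings and expansions. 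The crucial claim is that this gluing-and-completion process terminates: any new $\mathscr{P}$-expansion forced in the glued graph must, because of the Adian (cycle-free) hypothesis together with $E$-unitarity (Theorem~\ref{E-unitary}), be ``localized'' — it cannot create a genuinely new $R$-word occurrence that was not already accounted for in one of the single-occurrence pieces, since a new occurrence would require an overlap of two relation words, which cycle-freeness forbids. Hence the expansion process closes after finitely many steps and the resulting closed graph, which by Theorem~\ref{closure} is $S\Gamma(M,X,w)$, is finite.

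To make the gluing rigorous I would use the full $\mathscr{P}$-expansion machinery: start from $\Gamma_w$, perform full $\mathscr{P}$-expansions, and track that after each stage the graph decomposes as a bounded union of (determinized forms of) the single-occurrence Schützenberger graphs overlapping only along bounded ``interface'' subgraphs coming from the $w_i$. Because each single-occurrence graph is finite and there are only $k+1$ of them, a uniform bound on the number of vertices follows, provided the interfaces do not grow unboundedly — and they do not, again because no new relation application can straddle an interface without producing a forbidden overlap. One technical point to handle carefully is that folding can identify vertices across different pieces, potentially cascading; here I would invoke confluence of folding \cite{SG} and the fact that folding only decreases vertex count to ensure the cascade is finite.

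The main obstacle I anticipate is precisely the control of these interfaces: proving that the expansion/folding process at the boundary between two single-occurrence pieces cannot generate an unbounded chain of new expansions. This requires a careful combinatorial lemma — essentially that in an Adian presentation any occurrence of $u$ or $v$ in a word obtained by the expansion process lies (up to the natural partial order / subgraph inclusion) within one of the original single-occurrence factors or is a consequence of one already present — and I expect the proof of that lemma, using the left/right cycle-free condition to rule out overlaps of $u$ with $u$, $u$ with $v$, and $v$ with $v$, to be the heart of the argument.
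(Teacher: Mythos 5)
Your overall strategy---induct on the number of $R$-word occurrences, build the graph from finite pieces supplied by the hypothesis/induction, and argue that the expansions forced at the interfaces terminate---is the same skeleton as the paper's proof (the paper splits $w\equiv w_1w_2$, glues $S\Gamma(w_1)$ and $S\Gamma(w_2)$ at the junction vertex $\gamma$, and analyzes the transversals through $\gamma$). However, there is a genuine gap at the exact point where all the work lies: you claim that cycle-freeness of the left and right graphs forbids $u$ and $v$ from overlapping themselves or each other, so that occurrences of $R$-words in any word arising during the expansion are disjoint and no new straddling occurrence can be created. This is false. The Adian condition only rules out cycles in $LG$ and $RG$ (for one relation, essentially that $u$ and $v$ do not share a first letter or a last letter); it does not prevent a suffix of $u$ from being a prefix of $v$, nor $u$ from having equal prefix and suffix. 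For instance, $\langle a,b,c \mid ab=bc\rangle$ is Adian and the suffix $b$ of $u$ is the prefix of $v$; likewise $\langle a,b,c\mid aba = c\rangle$ is Adian, $u\equiv aba$ overlaps itself, and the positive word $ababa$ contains two overlapping occurrences of $u$. Consequently, a single $\mathscr{P}$-expansion at an interface \emph{can} create a new unsaturated $R$-segment straddling the newly sewn path, and this can cascade; your "localization" claim, which is the engine of your termination argument, does not hold.

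The paper isolates exactly the situation you assume into a preliminary "Fact 1": \emph{if} neither $R$-word is a subword of the other and there are no self-overlaps or mutual overlaps, then every positive word has a finite Sch\"utzenberger graph after a single round of saturation---that is the easy case, and it needs no induction hypothesis at all. The substance of the paper's proof is the remaining case analysis (four configurations: $u\equiv xsx$ and/or $v\equiv yty$; $u\equiv xy$, $v\equiv yz$; combinations of these; and overlaps on both sides), in each of which one must show that the chain of "generations" of bounded regions spawned by an unsaturated $R$-segment through $\gamma$ is forced to consume letters of the finite transversal $t_i$ (or, in the pyramid case $s\equiv t$, to shrink layer by layer), and in the genuinely bad configuration (Case 4) one must extract a single-occurrence word $t'$ with $S\Gamma(t')$ infinite to contradict the hypothesis. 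To repair your proposal you would need to drop the no-overlap claim and supply precisely this kind of termination analysis for each admissible overlap pattern; without it the proof does not go through.
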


\section{Main Theorem}
The following lemma has been proved in \cite{AD}. 

\begin{lemma}\label{1} Let $M=Inv\langle X,R\rangle$ be an Adian inverse semigroup and $w\in X^+$. Then no two edges fold together in Stephen's process of constructing approximations of the Sch\"{u}tzenberger graph of $w$. 
\end{lemma}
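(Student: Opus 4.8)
The plan is to argue by induction on the number of elementary $\mathscr{P}$-expansions performed in Stephen's iterative construction, showing that every intermediate birooted inverse word graph produced from the linear graph of $w$ is already deterministic. Since folding is only ever invoked during the determinization step, determinism of each intermediate graph is exactly the assertion that no two edges are ever folded together. It suffices to treat elementary $\mathscr{P}$-expansions, because each $\mathscr{P}$-expansion (and hence each full $\mathscr{P}$-expansion) is a sequence of elementary expansions interleaved with determinization.

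First I would record two structural facts that persist throughout the construction because the presentation is positive and $w\in X^+$. First, every edge carries a label in $X$: the linear graph of $w$ uses only letters of $X$, and every elementary $\mathscr{P}$-expansion sews on a linear graph of $u$ or of $v$, again a positive word (I keep the formal inverse edges but direct attention to the positive orientation). Second, any directed positive path in the current graph projects, letter by letter, onto a walk in the presentation; consequently a coincidence between the first letters of the two sides of the relation, or between two co-oriented out-edges at a common vertex, can be read off as an edge or a walk in the left graph $LG\langle X\mid R\rangle$, while the symmetric phenomenon at terminal vertices reads off in the right graph $RG\langle X\mid R\rangle$.

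Now the inductive step. Assume the graph $\Gamma_k$ obtained after $k$ expansions is deterministic. An elementary $\mathscr{P}$-expansion chooses the relation $u=v$ with, say, $u=a_1\cdots a_m$ readable from a vertex $v_1$ to a vertex $v_2$ while $v=b_1\cdots b_n$ is not, and sews the linear graph of $v$ from $v_1$ to $v_2$. Every newly created vertex is internal to this segment and carries a single in-edge and a single out-edge, so no fold can be forced among the new vertices; since $\Gamma_k$ was deterministic, the only vertices at which two identically labeled co-oriented edges can meet are the endpoints $v_1$ (out-edges) and $v_2$ (in-edges). I would then rule out each coincidence. At $v_1$ the attached segment contributes a new out-edge labeled $b_1$, so a conflict means $v_1$ already carried an out-edge labeled $b_1$. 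If that pre-existing $b_1$-edge is the initial edge of the readable copy of $u$, then $a_1=b_1$ and the relation $u=v$ yields a loop at that generator in $LG\langle X\mid R\rangle$, an immediate left cycle. Otherwise I trace the two positive out-paths from $v_1$ — the one reading $u$ and the one beginning with the pre-existing $b_1$ — back through the stages at which their initial edges were created, and assemble from these traces a closed walk in $LG\langle X\mid R\rangle$, which the Adian hypothesis forbids. The argument at $v_2$ is the mirror image: reading suffixes backward, a conflicting in-edge labeled $b_n$ yields a closed walk in $RG\langle X\mid R\rangle$, again contradicting cycle-freeness.

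The main obstacle I anticipate is the ``otherwise'' case: converting a coincidence of labels lying deep inside the graph into a genuine closed walk in the left (respectively right) graph. The delicate points are to use the inductive determinism to be certain the two out-paths from $v_1$ are honestly distinct edges, so that a fold is truly being contemplated, and to track how each offending edge entered the construction so that the projected walk in $LG\langle X\mid R\rangle$ actually closes up rather than merely wandering. Positivity of all labels is what makes the projection to the left and right graphs well defined, and cycle-freeness is then precisely the input that closes the contradiction. Once the inductive step is established, Theorem~\ref{closure} ensures that this folding-free process still computes the Sch\"{u}tzenberger automaton of $w$, completing the proof.
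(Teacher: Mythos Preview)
The paper does not give its own proof of this lemma: it is quoted from \cite{AD}, so there is nothing in the present paper to compare your argument against line by line. Your inductive strategy --- show that every intermediate graph in Stephen's procedure is already deterministic, localise any potential fold to the two attachment vertices $v_1,v_2$ of a freshly sewn segment, and derive a closed walk in $LG\langle X\mid R\rangle$ (respectively $RG\langle X\mid R\rangle$) from a collision there --- is the standard route and is exactly what the proof in \cite{AD} does.

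There is, however, a genuine soft spot in your ``otherwise'' case, and you correctly flag it. Saying you will ``trace the two positive out-paths \ldots back through the stages at which their initial edges were created, and assemble from these traces a closed walk'' is not yet an argument: a closed \emph{walk} can exist in a forest, so you must produce a simple cycle. The clean fix is to strengthen the inductive hypothesis. Maintain, for every vertex $p$, not merely that the out-labels at $p$ are pairwise distinct, but that they span a \emph{subtree} of $LG\langle X\mid R\rangle$ whose edges are exactly those contributed by the expansions performed with $p$ as initial vertex. This holds for the linear graph (each vertex has at most one out-edge), and each expansion at $p$ reads one side of a relation (so its first letter is already an out-label at $p$) and sews the other side (adding a new out-label adjacent in $LG$ to the old one). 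If the new label is genuinely new you have attached a leaf and the subtree grows; if it coincides with an existing out-label you have added an $LG$-edge between two vertices of a tree, creating a simple cycle inside $LG$, contradicting the Adian hypothesis. The dual invariant on in-labels and $RG$ handles $v_2$. With this invariant in place your sketch becomes a proof; without it, the ``tracing back'' step is where a reader will stop believing you.
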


So, no folding of edges occurs at any step of the proof of the following lemma.

\begin{theorem} Let $\langle X|u=v\rangle$ be an Adian presentation. The Sch\"{u}tzenberger graph of every positive word, that contains an $R$-word only once as it's subword, is finite over the presentation $\langle X|u=v\rangle$ if and only if the Sch\"{u}tzenberger graph of every positive word is finite over the presentation $\langle X|u=v\rangle$. 
\end{theorem}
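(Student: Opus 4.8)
The plan is to prove the nontrivial direction (left-to-right); the converse is immediate since any positive word containing an $R$-word once is itself a positive word. So assume that every positive word containing an $R$-word exactly once as a subword has a finite Schützenberger graph, and let $w \in X^+$ be arbitrary. I would argue by induction on the number $k$ of occurrences of $R$-words ($u$ or $v$) as subwords of $w$. The base case $k \leq 1$ is the hypothesis. For the inductive step, the key idea is that building $S\Gamma(M,X,w)$ by Stephen's iterative procedure starting from the linear graph $\Gamma_w$ can be organized so that expansions are performed ``locally'' around each occurrence of an $R$-word, and—crucially—because the presentation is Adian, Lemma~\ref{1} guarantees no edges ever fold together, so the linear graph of $w$ embeds isometrically into every approximate graph and into $S\Gamma(M,X,w)$ itself. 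Thus the $k$ occurrences of $R$-words in $\Gamma_w$ remain ``separated'' along the embedded copy of $\Gamma_w$, and one can hope to realize $S\Gamma(M,X,w)$ as a union of $k$ overlapping pieces, each of which looks like (a quotient of) the Schützenberger graph of a positive word with strictly fewer $R$-word occurrences.

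Concretely, the main construction I would set up: write $w = w_1 z w_2$ where $z$ is (say) the last occurrence of an $R$-word, chosen so that $w_1 z$ contains that occurrence once and $w' := w_1' z w_2$, obtained by deleting the other $R$-occurrences from $w_1$ via the substitution $u \leftrightarrow v$ where needed—no, more carefully: let $w = p q$ be a factorization where $q$ is the shortest suffix of $w$ containing the last $R$-occurrence and $p$ still contains $k-1$ occurrences. By the inductive hypothesis applied to a suitable positive word reading $p$ (respectively to $q$, which contains one $R$-occurrence), the Schützenberger graphs of $p$ and $q$ are finite. I would then show that $S\Gamma(M,X,w)$ is obtained from the disjoint union of $S\Gamma(M,X,p)$ and $S\Gamma(M,X,q)$ by identifying the terminal vertex region of the $p$-automaton with the initial vertex region of the $q$-automaton along the embedded linear segment they share, followed by finitely many $\mathscr{P}$-expansions that do not ``escape'' these two pieces. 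The no-folding property from Lemma~\ref{1} is what keeps this gluing from collapsing the graph and what ensures the $\mathscr{P}$-expansions triggered near the seam only involve the $R$-word $z$ at the seam and the already-present material, so the process terminates. Finiteness of $S\Gamma(M,X,w)$ then follows from finiteness of the two glued pieces plus finitely many added vertices.

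The hard part will be controlling the ``interaction'' at the seam: after gluing $S\Gamma(M,X,p)$ and $S\Gamma(M,X,q)$, new $\mathscr{P}$-expansions may become available that were available in neither piece alone, because a path labeled by one side of the relation might now run partly through the $p$-part and partly through the $q$-part. I would need to argue that any such ``crossing'' occurrence of $u$ or $v$ must be essentially the occurrence $z$ itself (using that the presentation is Adian and cycle-free, so $R$-words cannot overlap themselves in complicated ways, and using the isometrically embedded copy of $\Gamma_w$ to locate occurrences), and hence only finitely many such expansions arise before the glued graph becomes closed. Establishing this ``bounded interaction'' claim rigorously—presumably via a careful analysis of how an $R$-word can be read in a union of two deterministic inverse word graphs that overlap along a segment, invoking the Adian hypothesis to rule out pathological overlaps—is where the real work lies, and it is the step I expect to occupy the bulk of the argument.
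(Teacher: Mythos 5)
Your overall skeleton matches the paper's: induct on the number of $R$-word occurrences, split $w$ into two factors each with fewer occurrences, glue their (finite, by induction) Sch\"{u}tzenberger graphs to the linear graph of $w$, observe via Lemma~\ref{1} (no folding) that the only new unsaturated $R$-segments must straddle the seam vertex, and then control the expansions triggered there. But the step you defer --- the ``bounded interaction'' claim --- is not a technical detail to be filled in later; it is essentially the entire proof, and the specific way you propose to discharge it would fail. Your assertion that any crossing occurrence of $u$ or $v$ ``must be essentially the occurrence $z$ itself'' and that ``only finitely many such expansions arise'' as a consequence of the Adian/cycle-free hypothesis alone is not correct: sewing on the other side of the relation at the seam can create a \emph{new} unsaturated $R$-segment (overlapping the newly attached path), sewing on that one creates another, and so on, so a priori an infinite cascade of expansions propagates outward from the seam. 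Being Adian does not by itself forbid $u$ or $v$ from overlapping itself or the other relator (e.g.\ $u\equiv xsx$, or a suffix of $u$ equal to a prefix of $v$), and these are exactly the configurations that produce such cascades.

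What the paper actually does to close this gap is a full case analysis of the possible self-overlaps and mutual overlaps of $u$ and $v$ (its Fact~1, Fact~2, and Cases~1--4). In each overlap pattern it tracks the ``generations'' of bounded regions produced by successive expansions at the seam and shows that each new bounded region either shares a boundary edge with the (finite) transversal through the seam, or the regions stack in a pyramid whose base is bounded by the transversal's length; either way the cascade consumes edges of a fixed finite word and must terminate. Moreover, in the hardest case the argument does \emph{not} terminate for combinatorial reasons alone: the paper extracts from a hypothetical infinite cascade a positive word $t'$ containing a single $R$-word occurrence whose Sch\"{u}tzenberger graph would be infinite, contradicting the hypothesis of the theorem. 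So the single-occurrence hypothesis is used a second time, inside the inductive step, and not merely as the base case --- a use your proposal has no analogue of. Without the overlap case analysis and this second invocation of the hypothesis, your argument establishes only the setup, not the theorem.
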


\begin{proof} We just prove the direct statement. The converse is obvious. Let $w$ be a given positive word. If $w$ doesn't contain any $R$-word as it's subword then our claim follows immediately. If $w$ contains only one $R$-word as it's subword, such that the $R$-word occurs only once in $w$ then the finiteness of $S\Gamma(w)$ follows by the hypothesis of the theorem.  

We assume that the statement of the theorem is true for any positive word that contains less than $n$ number of (not necessarily distinct) $R$-words as it's subword, where $n\in \mathbb{N}$ and $n\geq 2$. We assume that $w$ contains $n$ number of (not necessarily distinct) $R$-words as it's subwords and we show that $S\Gamma(w)$ is finite. 



 We factorize $w$ as $w\equiv w_1w_2$ such that both $w_1$ and $w_2$ contain less than $n$ number of $R$-words as their subwords. The Sch\"{u}tzenberger graphs of $w_1$ and $w_2$ are finite by the induction hypothesis. We construct the linear automaton $(\alpha_0, \Gamma_0(w),\beta_0)$. We attach $S\Gamma(w_1)$ and $S\Gamma(w_2)$ to $(\alpha_0, \Gamma_0(w),\beta_0)$ at the segments labeled by $w_1$ and $w_2$, respectively.  We denote the resulting graph by $S_0$.
 
 \begin{figure}[h!]
\centering
\includegraphics[trim = 0mm 0mm 0mm 0mm, clip,width=2.8in]{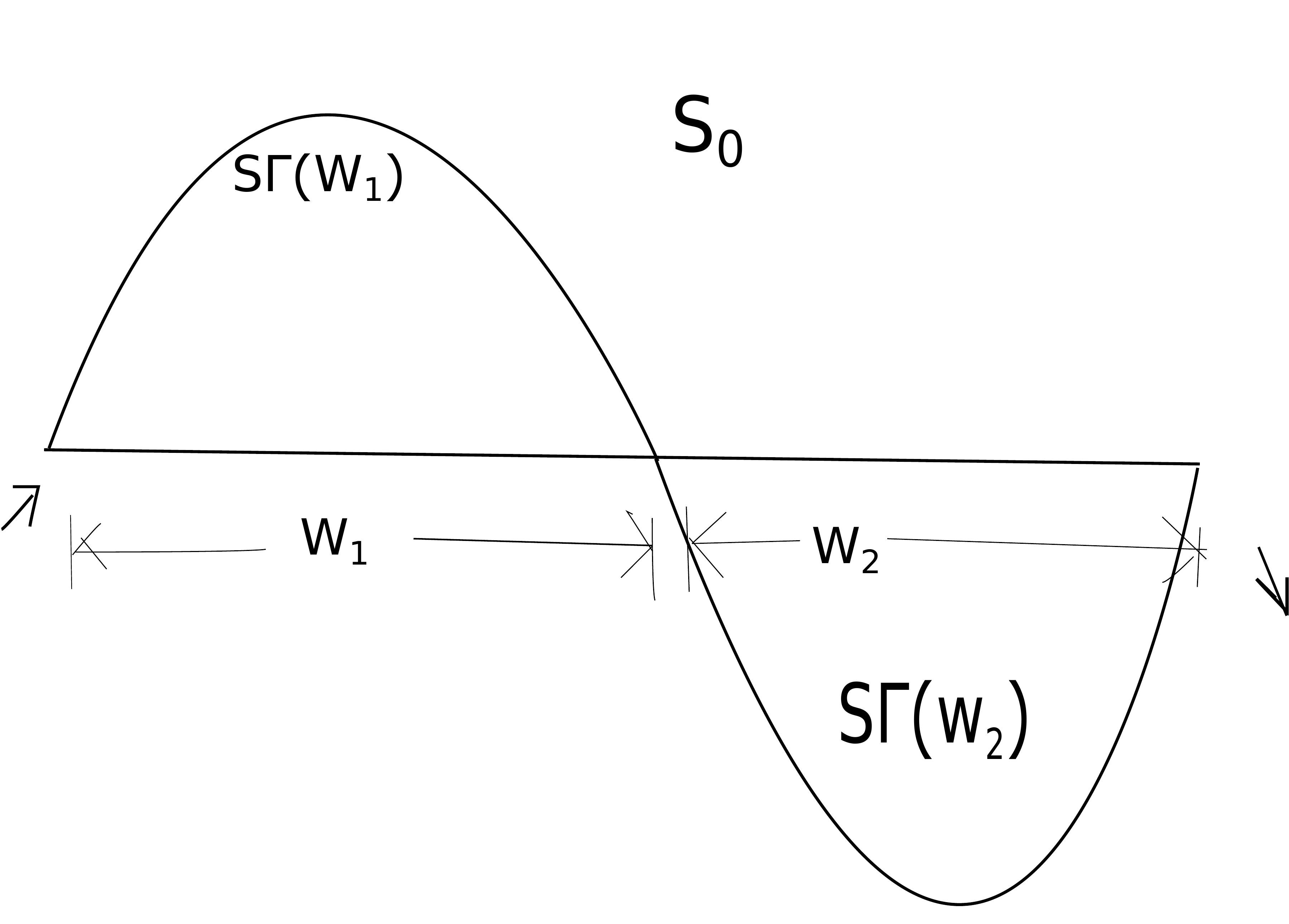}
\caption{ }
\label{fig301}
\end{figure}

 Every transversal (a positively labeled path from the initial vertex $\alpha_0$ to the terminal vertex $\beta_0$) of $S_0$ passes through the vertex $\gamma$, where $\gamma$ is the terminal vertex of the segment labeled by $w_1$ and the initial vertex of segment labeled by $w_2$, in $S_0$. We know that $S\Gamma (w_1)$ and $S\Gamma (w_2)$ are complete in the sense that we can not find any unsaturated segment labeled by an $R$-word in these graphs. So, the only place where we can find an unsaturated segment labeled by an $R$-word, is at $\gamma$, that is, we can find an unsaturated segment labeled by an $R$-word that  passes through the vertex $\gamma$ along a transversal of $S_0$.
 
There are only finite number of transversals in $S_0$, as $S_0$ is a finite graph. We consider those transversals of $S_0$ that contain an unsaturated segment labeled by an $R$-word. We assume that these transversals are labeled by $t_1,t_2,...,t_m$, for some $m\in \mathbb{N}$. 

 \begin{figure}[h!]
\centering
\includegraphics[trim = 0mm 0mm 0mm 0mm, clip,width=2.8in]{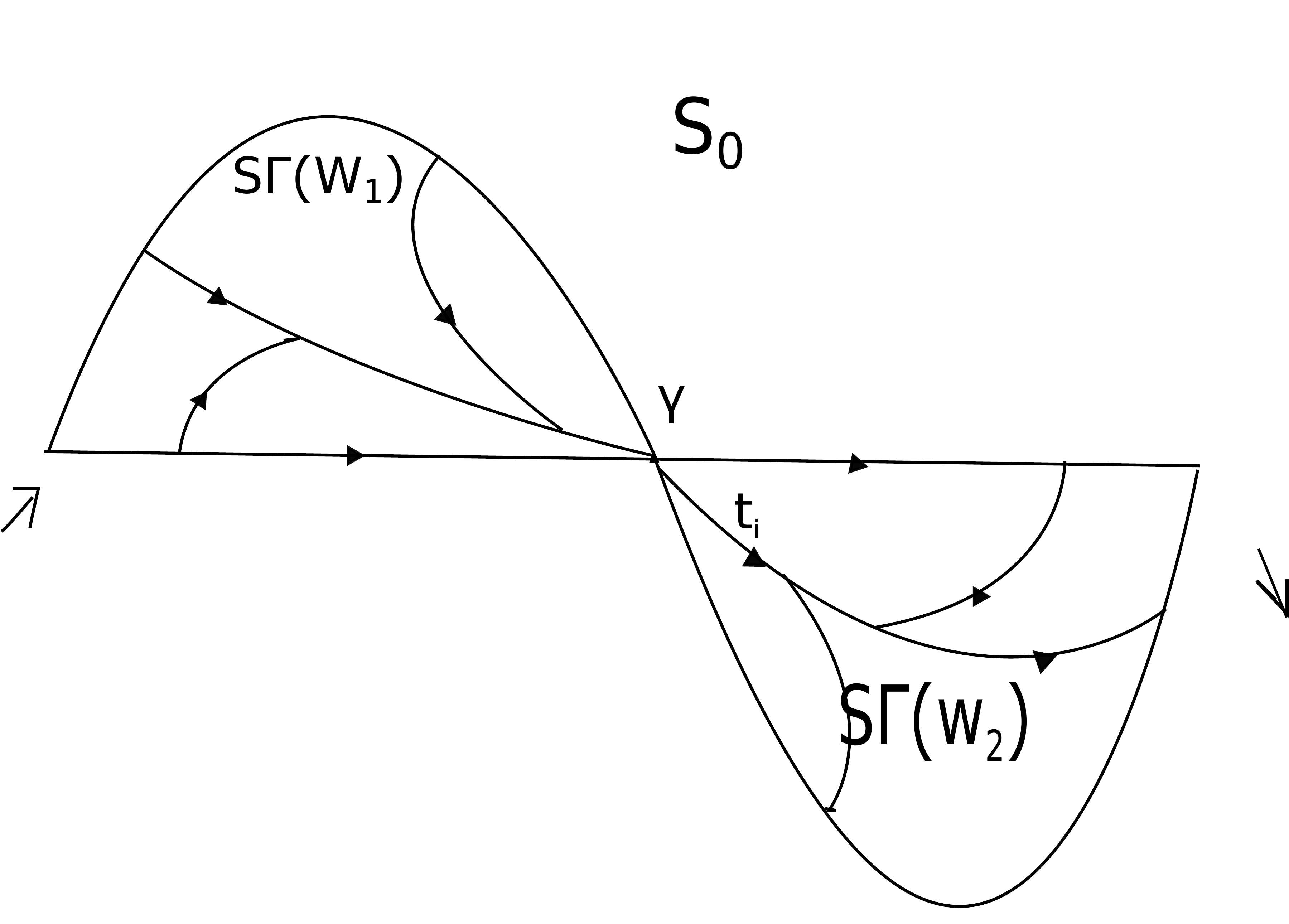}
\caption{ }
\label{fig301}
\end{figure}

Nest, we show that $S\Gamma(t_i)$ is finite, for all $i\in \{1,2,...,m\}$.

\textbf{\textit{Fact: 1}} \textit{Let $\langle X|u=v\rangle$ be an Adian presentation. If none of the $R$-word is a subword of the other $R$-word, and no $R$-word overlaps with itself or with the other $R$-word, then $S\Gamma(t)$ for all $t\in X^+$, is finite over the presentation $\langle X|u=v\rangle$. }

\begin{proof}
\textit{Proof of the fact:} Let $t$ be a positive word. If $t$ does not contain any $R$-word as it's subword then there is nothing to prove, because $S\Gamma(t)$ is finite in this case.

We assume that $t$ contains $n$ number of (not necessarily distinct) $R$-words, for some $n \in \mathbb{N}$. We construct the linear automaton of $t$, $(\alpha_0,\Gamma_0(t), \beta_0)$.  We sew on a path labeled by the other side of the relation from the initial vertex to the terminal vertex of every segment, labeled by an $R$-word, of $(\alpha_0,\Gamma_0(t), \beta_0)$. As we know that none of the edges get identified with each other because of Lemma \ref{1}, we denote the resulting automaton by $(\alpha_1,\Gamma_1(t), \beta_1)$. Since none of the $R$-word is subword of the other $R$-word, therefore, we cannot find an unsaturated sub-segment labeled by an $R$-word  of the newly attached segments of $(\alpha_1,\Gamma_1(t), \beta_1)$. We know from Proposition 3(iii) of \cite{AD}, that $(\alpha_1,\Gamma_1(t), \beta_1)$ is a directional graph in the sense that every vertex of $(\alpha_1,\Gamma_1(t), \beta_1)$ lies on a transversal from $\alpha_1$ to $\beta_1$. Since none of the $R$-word overlaps with itself or with the other $R$-word, therefore, we cannot find an unsaturated segment labeled by an $R$-word that either starts from a vertex that lies before the initial vertex of a segment that was attached in the previous step and terminating at an interior vertex of that segment, or that starts from an interior vertex of a segment that was attached in the previous step and terminates at a vertex that lies after the terminal vertex of that segment. So, we cannot expand $(\alpha_1,\Gamma_1(t), \beta_1)$ further by applying Full $P$-expansion.  Hence, the underlying graph of $(\alpha_1,\Gamma_1(t), \beta_1)$ is $S\Gamma(t)$, and it's finite.

\end{proof}

We assume that $S\Gamma (t_i)$, for some $i\in \{1,2,...,m\}$, is infinite,  that means, whenever we sew on a path labeled by an $R$-word from the initial vertex to the terminal vertex of an unsaturated segment labeled by an $R$-word,  it creates another unsaturated segment labeled by an $R$-word.  It follows from the above fact that, this is only possible when at least one of the following conditions hold.   
\begin{enumerate}
\item $v$ is a subword of $u$. 
 
 \item One of the $R$-word or both of the $R$-words overlap with themselves and/or both of the $R$-word overlap with each other. 
 \end{enumerate}
 If one of the $R$-word is a subword of the other, then Sch\"{u}tzwnberger  graphs of both the $R$-words are infinite. This contradicts the fact that every positive word that contain an $R$-word only once as it's subword has finite Sch\"{u}tzwnberger graph.   

The other possibility is this, that the $R$-words, $u$ and $v$ overlap with themselves and/or with each other.  Here it is important to remind the readers that the Sch\"{u}tzenberger graph of a positive word over an Adian presentation is a directional graph in the sense that each of it's vertex lies on a transversal from the initial vertex of the graph (called the source)  to the terminal vertex of the graph (called the sink), by proposition 3(iii) of \cite{AD}. n this case, when we sew on a path labeled by an $R$-word, it creates another unsaturated segment labeled by an $R$-word, that either starts from a vertex of a transversal that lies before the initial vertex of the newly attached segment and terminates at an interior vertex the newly attached segment or it starts from an interior vertex of the newly attached segment and terminates at a vertex of a transversal that occurs after the terminal vertex of the newly attached segment.  It is possible that the transversal labeled by $t_i$ contain more than one unsaturated segments labeled by the $R$-words passing through the vertex $\gamma$. No two edges fold together in the construction of $S\Gamma(t_i)$ by Lemma \ref{1}. So, the transversal labeled by $t_i$ contains at least one unsaturated segment labeled by an $R$-word passing through $\gamma$ that makes $S\Gamma(t_i)$ to be infinite, otherwise, the word $t_i$ can be factored as $t_i=t_{i_{1}} t_{i_{2}}$, where $t_{i_{1}}$ labels the path from $\alpha_0$ to the vertex $\gamma$ and $t_{i_{2}}$ labels the path from $\gamma$ to the vertex $\beta_0$. Note that, $S\Gamma (t_{i{1}})$  embeds in $S\Gamma (w_1)$ and $S\Gamma (t_{i_{2}})$ embeds in $S\Gamma (w_2)$, therefore $S\Gamma (t_{i_{1}})$  and $S\Gamma (t_{i_{2}})$ are finite. If $t_i$ contains no $R$-word passing through $\gamma$ that is making $S\Gamma(t_i)$ to be infinite, then $S\Gamma(t_i)$ will be a finite graph, because $S\Gamma(t_i)$ can be constructed by by attaching $S\Gamma(t_{i_1})$ and   $S\Gamma(t_{i_2})$ to the linear automaton of $t_i$ at the segments labeled by $t_{i_1}$ and $t_{i_2}$, and attaching the maximal finite Sch\"{u}tzenberger graph(s) generated by a subword of $t_i$, that contain(s) the $R$-word that labels an the unsaturated segment passing through $\gamma$. This is a contradiction to our hypothesis that $S\Gamma(t_i)$ is infinite. 


There are four cases of overlapping of an $R$-word with itself and/or with the other $R$-word. 

\begin{enumerate}

  \item $u\equiv xsx$ or/and $v\equiv yty$ for some $x,y\in X^+$ and $s,t\in X^*$, where $x$ and $y$ are the maximal prefixes of $u$ and $v$ that are also the suffixes of $u$ and $v$, and there is no overlap between $u$ and $v$. 
 
 \item $u\equiv xy$ and $v\equiv yz$ or $v\equiv zx$, for some $x,y,z\in X^+$. If $v\equiv yz$ then $y$ is the maximal suffix of $u$ that is a prefix of $v$ and no prefix of $u$ is a suffix of $v$. If $v\equiv zx$ then $x$ is the maximal prefix of $u$ that is a suffix of $v$ and no suffix of $u$ is a prefix of $v$. 
 
 \item  $u\equiv xsx$ or/and $v\equiv yty$ for some $x,y\in X^+$ and $s,t\in X^*$, where $x$ and $y$ are the maximal prefixes of $u$ and $v$ that are also the suffixes of $u$ and $v$, and  either a prefix of $u$ is a suffix of $v$ or a suffix of $u$ is a prefix of $v$.
 
\item A prefix of $u$ is a suffix of $v$ and a suffix of $u$ is prefix of $v$.  
 
 \end{enumerate}

 \textit{Csse 1:}  First, we assume that $u\equiv xsx$ for some $x\in X^+$ and $s\in X^*$ (i.e., $u$ has same prefix and suffix), whereas $v$ is any positive word such that no proper prefix of $v$ is a suffix of $v$ and $v$ does not overlap with $u$ either. We show that  in this case the Sch\"{u}tzenberger graph of $t_i$ cannot be  infinite because of an unsaturated segment labeled by an $R$-word passing through the vertex $\gamma$.

 If the unsaturated segment passing through the vertex $\gamma$ is labeled  by $u$ then we sew on a path labeled by $v$, from the initial vertex to the terminal vertex of the segment labeled by $u$. Since $v$ does not overlap with itself or with $u$ , therefore we can't find an unsaturated segment labeled by an $R$-word, where we can sew on a new path labeled by an $R$-word. So, $S\Gamma(t_i)$ remains finite.

 If the unsaturated segment passing through the vertex $\gamma$ is labeled by $v$, then we sew on a path labeled by $u$ from the initial vertex to the terminal vertex of the segment labeled by $v$, that creates a first generation bounded region. Since $u$ overlaps with itself, therefore if the segment labeled by $v$ is followed by a segment labeled by $xs$ or a segment labeled by $sx$ is followed by the segment labeled by $v$, in the transversal labeled by $t_i$, then we can find some unsaturated segment labeled by $u$ that either starts from a vertex of the transversal $t_i$ that lies before the initial vertex of the segment labeled by $v$ and terminates at an interior vertex of the segment labeled by $u$ or it starts from an interior vertex of the segment labeled by $u$ and terminates at a vertex of the transversal $t_i$ that lies after the terminal vertex of the segment labeled by $v$.  We attach a new segment labeled by $v$ from the initial vertex to the terminal vertex of the unsaturated segment(s) to obtain some  second generation bounded regions.  Otherwise the process of obtaining higher generation bounded regions starting from an unsaturated labeled by an $R$-word  passing through $\gamma$ terminates at the previous step.  The $R$-word $v$ does't overlap with itself or with $u$, therefore we cannot find an unsaturated segment labeled by an $R$-word, that either terminates at an interior vertex of the segment(s) attached in the previous step or that starts from an interior vertex of the segment(s) attached in the previous step,  where we can attach a new segment labeled by the other side of the relation.  So,  $S\Gamma(t_i)$ cannot be  infinite.

 Now we assume that $u\equiv xsx$ and $v\equiv yty$ for some $x,y\in X^+$, $s,t\in X^*$, and $u$ and $v$ do not overlap with each other. Note that $x$ and $y$ cannot be the same words otherwise the presentation $\langle X|u=v\rangle$ will not be an Adian presentation. However, $s$ and $t$ can be the same words. We show that in both the cases, when $s\equiv t$ and when $s\not\equiv t$, $S\Gamma(t_i)$ cannot be  infinite because of an unsaturated segment labeled by an$R$-word passing through $\gamma$.

 We first assume that $s\not\equiv t$.  Without loss of generality we can assume that the unsaturated segment passing through the vertex $\gamma$ is labeled by $v$  A similar argument can be used when the passing through $\gamma$ is labeled by $u$. We consider the linear automaton of $t_i$ and we sew on a segment labeled by $u$ from the initial vertex to the terminal vertex of the segment labeled by $v$ and obtain a first generation bounded region.  If the word $v$ is followed by the word $xs$ or the word $sx$ is followed by $v$ in $t_i$, then we can find some unsaturated segments labeled by $u$. We sew on  segments labeled by $v$ from the initial vertex to the terminal vertex of the unsaturated segments labeled by $u$ created in the previous step. If the word $xsv$ is followed by the word $yt$ or the the word $ty$ is followed by $vsx$ in $t_i$, then we can find some unsaturated segments labeled by $v$.  We sew on  segments labeled by $u$ from the initial vertex to the terminal vertex of the unsaturated segments labeled by $v$ created in the previous step.  We observe that every time we apply the  $P$-expansion, we utilize  some of the segments of the linear automaton of $t_i$.  The process of application of elementary expansion will eventually  terminate, as $t_i$ is a word of finite length. So, the unsaturated segment labeled by an $R$-word passing through the vertex $\gamma$ cannot make $S\Gamma(t_i)$ to be an infinite graph.

Now we assume that $s\equiv t$ and the unsaturated segment passing through the vertex $\gamma$ is labeled by $v$. A similar argument can be used when the unsaturated segment passing through $\gamma$ is labeled by $u$. We consider the linear automaton of $t_i$ and we sew on a segment labeled by $u$ from the initial vertex to the terminal vertex of the segment labeled by $v$ and obtain a first generation bounded region.  If the word $v$ is followed by the word $xs$ or the word $sx$ is followed by $v$ in $t_i$, then we can find some unsaturated segments labeled by $u$. We sew on  segments labeled by $v$ from the initial vertex to the terminal vertex of the unsaturated segments labeled by $u$ created in the previous step and obtain some second generation bounded regions. If  the word $xsvsx$ is a subword of  $t_i$, where $\gamma$ is an interior vertex of the segment labeled by $v$, then we will have second generation bounded regions on both the sides of the first generation bounded region created in the previous step.  Since $s\equiv t$, therefore we can find an unsaturated segment labeled by $v$ starting from the segment labelled by $v$, sewed to obtain the second generation bounded region, and terminating at an interior vertex of the other segment labeled by $v$,  sewed to obtain the second generation bounded region, as shown in the following figure.

\begin{figure}[h!]
\centering
\includegraphics[trim = 0mm 0mm 0mm 0mm, clip,width=2.8in]{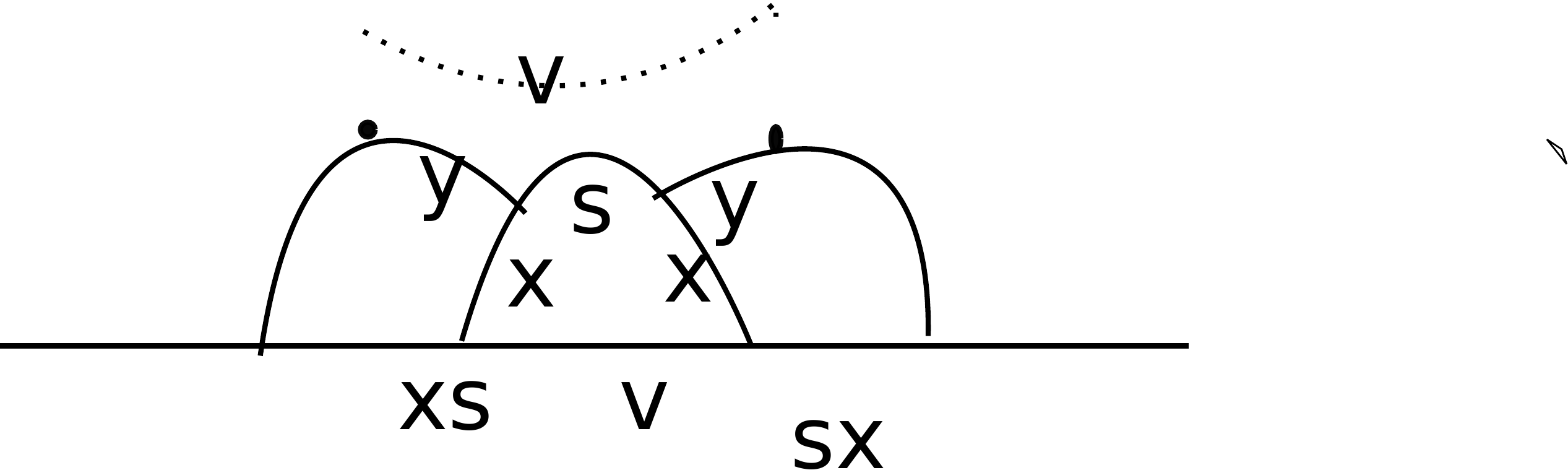}
\caption{ }
\label{fig301}
\end{figure}

If the word $xsv$ is followed by $yt$, the word $ty$ is followed by $vsx$, or the word $ytxsvsxty$ is a subword of $t_i$ then we find some unsaturated segment(s) labeled by $v$. We sew on the segment(s) labeled by $u$ from the initial vertex to the terminal vertex  of the unsaturated segment(s) labeled by $v$ and obtain the third generation bounded regions. We continue this  process of applying $P$-expansion and observe that the subgraph of $S\Gamma(t_i)$ which is generated by the segment labeled by $v$ passes though $\gamma$, grows like a pyramid, that is, the first layer of bounded regions along the transversal labeled by $t_i$ contain maximum number of bounded regions, the second layer of bounded regions, those bounded regions whose one side lies on the boundary of the first layer bounded regions,  contain fewer bounded regions than the first layer of bounded regions. Similarly, the third layer of bounded regions contain  fewer than second layer of bounded regions.  So, if the base of this pyramid contain $N$ number of bounded regions then the second layer contains $(N-2)$ bounded regions, the third layer contains $(N-4)$ bounded regions and son and so forth.  Since, $t_i$ is a word of finite length, therefore, there can be only finite number of bounded regions along the transversal labeled by $t_i$.  Hence, the segment labeled by $v$ passing through $\gamma$ cannot make $S\Gamma(t_i)$ to be an infinite graph.

\textit{Case 2:} Without loss of generality, we assume that $u\equiv xy$ and $v\equiv yz$, where $y$ is the maximal suffix of $u$ that is a prefix of $v$, no prefix of $u$ is a suffix of $v$ and none of the $R$-words have same prefix and suffix. We also assume that the unsaturated segment passing through $\gamma$ is labeled by $v$. 

We construct the linear automaton of $t_i$., and we sew on a segment labeled by $u$ from the initial vertex to the terminal vertex of the segment labeled by $v$ and obtain a first generation bounded region. If the word $z$ is followed by $v$ in $t_i$, then we can find an unsaturated segment labeled by $v$ starting from a vertex of the transversal $t_i$ and terminating at an interior vertex of the segment labeled by $u$, attached in the previous step. We sew on a segment labeled by $u$ from the initial vertex to the terminal vertex of the unsaturated segment labeled by $v$, and obtain a second generation bounded region. If the word $z$ is followed by the word $zv$, where $v$ is the segment passing through $\gamma$, in $t_i$, then once again we can find an unsaturated segment labeled by $v$ starting from a vertex of $t_i$ and terminating at an interior vertex of the segment labeled by $u$, attached in the previous step. So, we sew on a segment labeled by $u$ from the initial vertex to the terminal vertex of the unsaturated segment labeled by $v$, and obtain a third generation bounded region. We continue this process of constructing higher and higher generation bounded regions and observe that every bounded region shares a boundary edge with the transversal $t_i$. So, this construction process eventually terminates, as $t_i$ is a word of finite length. Hence the unsaturated segment of $t_i$ labeled by $v$ passing through $\gamma$ can't make $S\Gamma(t_i)$ to be an infinite graph.

\textit{Case 3:} The relation $(u,v)$ can satisfy case $1$ and case $2$ simultaneously.  So, without loss of generality, we assume that a suffix of $u$ is a prefix of $v$, that is, $u\equiv x_1y_1$ and $v\equiv y_1z_1$ for some $x_1, y_1,z_1 \in X^+$. We consider all of the possibilities of case $1$, and shaw that $S\Gamma(t_i)$ remains finite in each case. 

First, we assume that $u\equiv xsx$ for some $x\in X^+$ and $s\in X^*$ (i.e., $u$ has same prefix and suffix), whereas $v$ is any positive word such that no proper prefix of $v$ is a suffix of $v$ and $v$ overlaps with $u$ as mentioned in the previous paragraph. We show that  in this case the Sch\"{u}tzenberger graph of $t_i$ cannot be  infinite because of an unsaturated segment labeled by an $R$-word passing through the vertex $\gamma$.  
 
 
  If the unsaturated segment passing through the vertex $\gamma$ is labeled by $u$, then we sew on a path labeled by $v$ from the initial vertex to the terminal vertex of the segment labeled by $u$, that creates a first generation bounded region. Since $v$ doesn't overlap with itself, therefore if the segment labeled by $u$ is followed by a segment labeled by $x_1$, in the transversal $t_i$, then we can find an unsaturated segment labeled by $u$ that starts from a vertex of $t_i$ and terminates at an interior vertex of the segment labeled by $v$, that was attached in the previous step.  We attach a new segment labeled by $v$ from the initial vertex to the terminal vertex of the unsaturated segment to obtain a  second generation bounded region.  Otherwise the process of obtaining higher generation bounded regions starting from an unsaturated segment labeled by an $R$-word  passing through $\gamma$ terminates at the previous step.  If the segment labeled by $x_1u$ of $t_i$ is followed by $x_1$, then we can find another unsaturated segment labled by $u$, where we can construct a third generation bounded region. Otherwise, the construction process terminates at the previous step. The construction process terminates after a finite number of sets, because every bounded region constructed through this process shares a boundary edge with the transversal $t_i$, and $t_i$ has finite length.

  \textbf{Fact: 2} \textit{Let $\langle X|u=v\rangle$ be an Adian presentation, such that none of the $R$-word is a subword of the other $R$-word, $u\equiv xsx$, for some $x\in X^+$ and $s\in X^*$, and a suffix of $u$ is a prefix of $v$ (that is, $u\equiv x_1y_1$ and $v\equiv y_1 z_1$, for some $x_1,y_1,z_1\in X^+$), but no prefix of $u$ is a suffix of $v$, then we can't read both the $R$-words starting from some interior vertices of a segment labeled by $u$ and terminating after the terminal vertex of the segment labeled by $u$  along a positively labeled path.}
 
 \begin{proof} \textit{Proof of the fact:} First of all we realize that $x\not\equiv y_1$, otherwise the presentation $\langle X|u=v\rangle$ is not Adian. 
 
 If $y_1$ is a proper suffix of $x$, we read $v$ starting from the segment labeled by $y_1$, and we also read $u$ starting from the segment labeled by $x$ along the same positively labeled path (as shown in the following figure), then there are two possibilities. If the segment labeled by $v$ terminates before the terminal vertex of the segment labeled by $u$, then $v$ is a proper subword of $u$. Which is a contradiction. If the segment labeled by $v$ terminates after the the segment labeled by $u$ (as shown in the following figure), then a longer prefix of $v$ is a suffix of $u$, which contradicts the maximality of $y_1$. 
 
 \begin{figure}[h!]
\centering
\includegraphics[trim = 0mm 0mm 0mm 0mm, clip,width=2.08in]{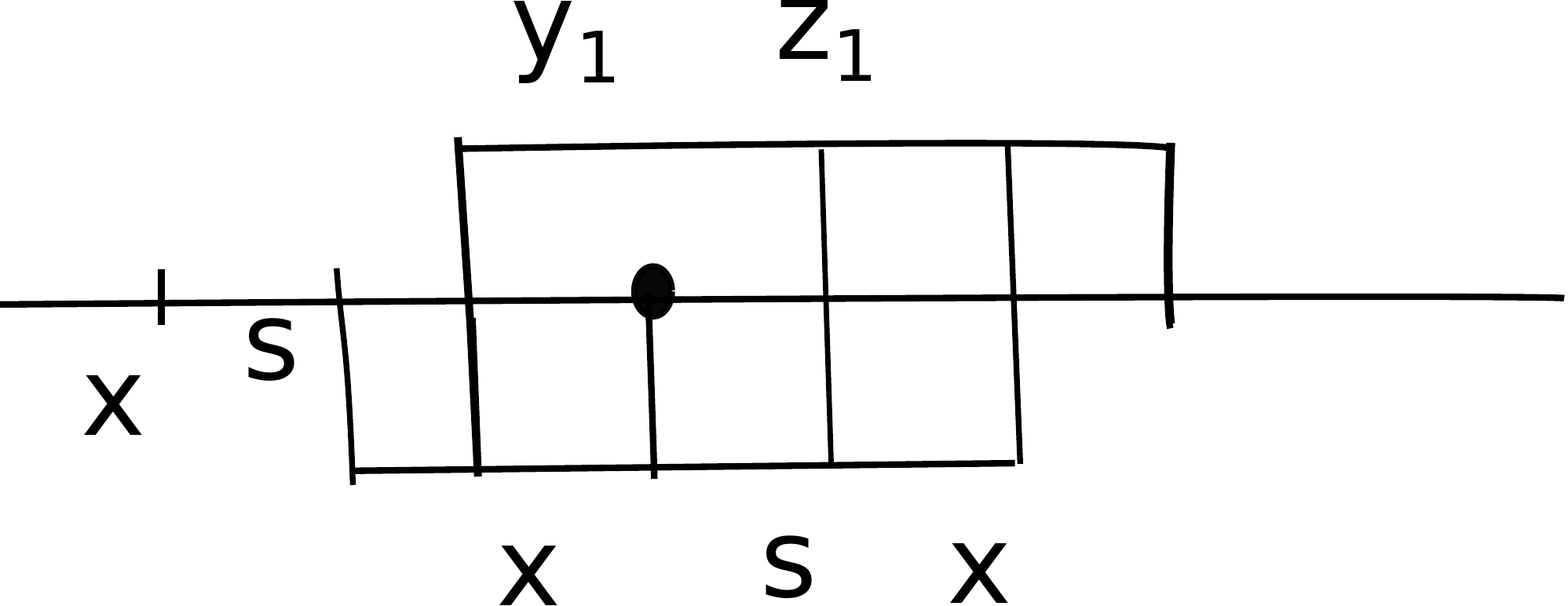}
\caption{ }
\label{fig301}
\end{figure}

 If $x$ is a proper suffix of $y_1$, we read $v$ starting from the segment labeled by $y_1$, and we also read $u$ starting from the segment labeled by $x$ along the same positively labeled path (as shown in the following figure), then once again there are two possibilities. If the segment labeled by $u$ terminates before the terminal vertex of the segment labeled by $v$, then $u$ is a subword of $v$ which is a contradiction. If the terminal vertex of the segment labeled by $u$ lies after the terminal vertex of the segment labeled by $v$, then a suffix of $v$ is a prefix of $u$, which is again a contradiction.  
 
  \begin{figure}[h!]
\centering
\includegraphics[trim = 0mm 0mm 0mm 0mm, clip,width=2.08in]{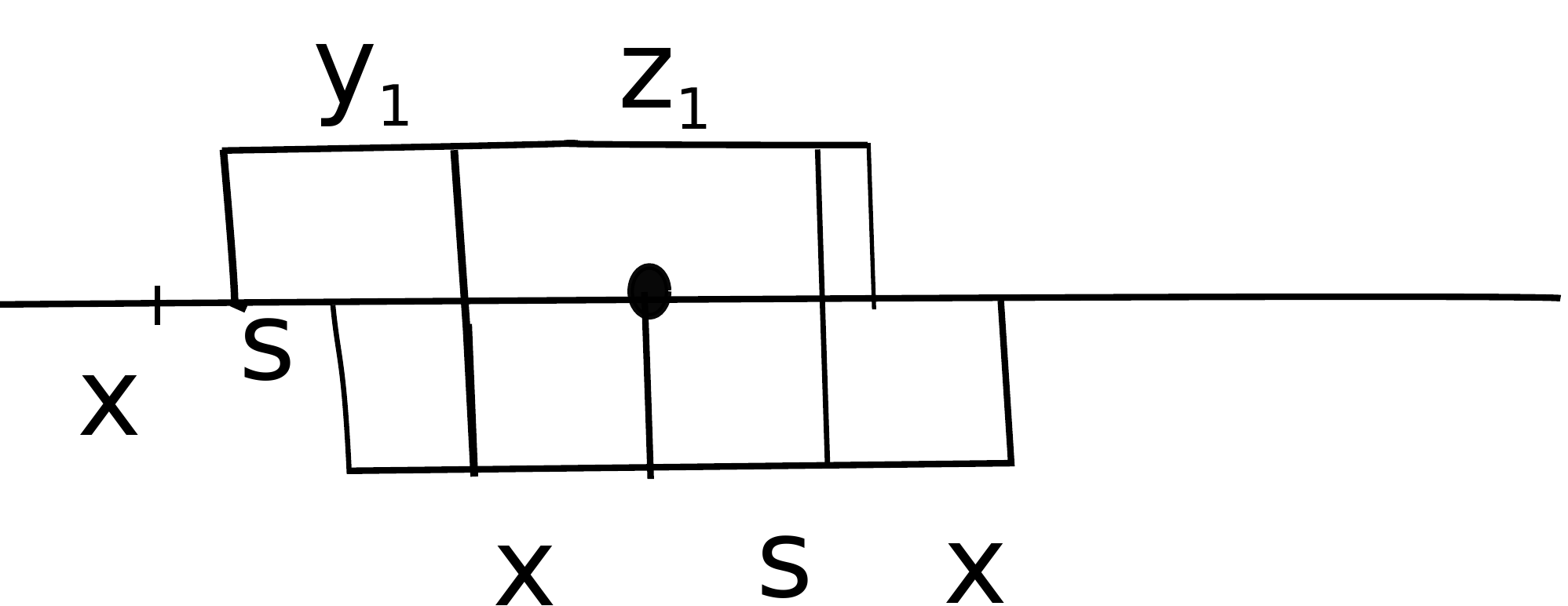}
\caption{ }
\label{fig301}
\end{figure}

  \end{proof}

  If the unsaturated segment of $t_i$ that passes through $\gamma$ is labelled by $v$, then we sew on a segment labeled by $u$ from the initial vertex to the terminal vertex of the segment labeled by $v$ and obtain a first generation bounded region. This can create some more unsaturated segments labeled by an $R$-word. 
  
  On one side of this first generation bounded region, if the segment labeled by $v$ of $t_i$ is followed by a segment labeled by $xs$, then we can find an unsaturated segment labeled by $u$. So, we sew on a segment labeled by $v$ from the initial vertex to the terminal vertex of the unsaturated segment labeled by $u$ and obtain a second generation bounded region. If the segment labeled by $xsv$ of $t_i$ is followed by $x_1$, then we can find an unsaturated segment labeled by $u$. We sew on a segment labeled by $v$ from the initial vertex to the terminal vertex of the unsaturated segment labeled by $u$, and obtain a third generation bounded region. We continue this process and we observe that every higher generation bounded region shares a boundary edge with the transversal $t_i$. Since $t_i$ is a word of a finite length, therefore, this process eventually terminates on this side.  
  
  On the other side of the first generation bounded region, we can't find two unsaturated segments labeled by $u$ and $v$ simultaneously, because of the Fact 2. So, if the segment labeled by $sx$ appears right  after the segment labeled by $v$, then we find an unsaturated segment labeled by $u$, and if the segment labeled by $z_1$ appears right after the segment labeled by $v$, then we can find an unsaturated segment labeled by $v$. If the unsaturated segment is labeled by $u$, then we sew on a segment labeled by $v$ from the initial vertex to the terminal vertex of the unsaturated segment  labeled by $u$ and obtain a second generation bounded region. In this case, the extension process terminates at this step as $v$ doesn't overlap with itself and no suffix of $v$ is a prefix of $u$ either. If the unsaturated segment is labeled by $v$ then we sew on a segment labeled by $u$ from the initial vertex to the terminal vertex of the unsaturated segment labeled by $v$. Once again, we have same situation as before, that is, if the sewing on the segment labeled by $u$ creates an unsaturated segment, then it will be either labeled by $u$ or by $v$, so we repeat the same steps. Note that every bounded region shares a boundary edge with the transversal labeled by $t_i$. Since $t_i$ is a word of finite length therefore, this process will eventually terminate on this side.   
  
  Now we assume that $u\equiv xsx$, $v\equiv yty$, for some $x,y\in X^+$, $s,t\in X^*$. There are two possibilities, either $s\equiv t$ or $s\not\equiv t$.  First, we assume that $s\not\equiv t$. We also assume that a suffix of $u$ is a prefix of $v$ (that is, $u\equiv x_1y_1$, $v\equiv y_1z_1$, for some $x_1,y_1, z_1\in X^+)$) but no prefix of $u$ is a suffix of $v$. In this case $x$ and $y_1$ can't be the same words, otherwise the presentation $\langle X|u=v\rangle$ will not be an Adian presentation. Without loss of generality, we assume that the unsaturated segment passing through the vertex $\gamma$ is labeled by $v$. 
  
  We sew on a segment labeled by $u$ from the initial vertex to the terminal vertex of the segment labeled by $v$. This can create some new unsaturated segments labeled by an $R$-word. On one side of the first generation bounded region, if a segment labeled by $xs$ appears  right before the segment labeled by $v$ in the transversal $t_i$, then we can find an unsaturated segment labeled by $u$. We sew on a segment labeled by $v$ from the initial vertex to the terminal vertex of the unsaturated segment labeled by $u$ and obtain a second generation bounded region.   If the segment labeled by $x_1$ appears right before the segment labeled by $xsv$ in $t_i$, then we find an unsaturated segment labeled by $ u$ that terminates at an interior vertex of the segment labeled by $v$ that was attached in the previous step. If the segment labeled by $yt$ appears right before the segment labeled by $xsv$ in $t_i$ then we find an unsaturated segment labeled by $v$ that terminates at an interior vertex of the segment labeled by $v$ that was attached in the previous step. We can't have both of the unsaturated segments ( one labeled by $u$ and the other one labeled by $v$, both terminating at the interior vertices of the segment labeled by $v$ that was attached in the previous step) together, because of the Fact $2$.  So, in either case, we sew on a segment labeled by the other side of the relation from the initial vertex to the terminal vertex of the unsaturated segment and obtain a third generation bounded region. We observe that every bounded region shares a boundary edge with the transversal $t_i$, but $t_i$ has a finite length, therefore, the extension process on this side of the first generation bounded region eventually terminates after a finite number of steps. Same argument can be used to show that the extension process of the graph terminates after a finite number of steps on the other side of the first generation bounded region. Hence $S\Gamma(t_i)$ remains finite in this case.

 Finally,  we assume that $u\equiv xsx$, $v\equiv yty$, for some $x,y\in X^+$, $s,t\in X^*$ and $s\equiv t$. We also assume that a suffix of $u$ is a prefix of $v$ (that is, $u\equiv x_1y_1$, $v\equiv y_1z_1$, for some $x_1,y_1, z_1\in X^+)$) but no prefix of $u$ is a suffix of $v$. Clearly $x$ and $y_1$ can't be the same words, otherwise the presentation $\langle X|u=v\rangle$ will not be an Adian presentation. 

 We observed that if $u\equiv xsx$, $v\equiv yty$, for some $x,y\in X^+$, $s,t\in X^*$ and $s\equiv t$, then we can have multiple layers of bounded regions. The second layer of bounded regions contain fewer  bounded regions than the first layer. The third layer of bounded regions contains fewer bounded regions than the second layer, and so on. By using the same argument, as in the above paragraphs, we can show that the first layer of bounded regions contain only finite number of bounded regions. Hence $S\Gamma(t_i)$  remains finite in this case.

\textit{Case 4} Without loss of generality, we assume that the unsaturated segment of $t_i$ passing through $\gamma$, that is making $S\Gamma(t_i)$ infinite, is labeled by $v$. We consider the linear automaton of $t_i$ and we sew on a segment labeled by $u$ from the initial vertex to the terminal vertex of the segment labeled by $v$ and obtain a first generation bounded region. We can either find an unsaturated segment labeled by an $R$-word starting from a vertex of $t_{i_{1}}$ and terminating at an interior vertex of the newly attached segment labeled $u$, or an unsaturated segment labeled by an $R$-word starting form an interior vertex of the newly attached segment labeled by $u$ and terminating at a vertex of $t_{i_{2}}$, or we can have both the cases together. We sew on path(s) labeled the other side of the relation from the initial vertex to the terminal vertex of the unsaturated segment(s) labeled by the $R$-word(s) and obtain some second generation  bounded region(s). For each second generation bounded region, we can find an unsaturated segment labeled by an $R$-word that either starts from a vertex that lies before the initial vertex of the segment attached in the previous step and terminates at an interior vertex of that segment, or that starts from an interior vertex of the segment attached in the previous step and terminates after the terminal vertex of that segment or both. So, we sew on a segments labeled by the other sides of the  relation and obtain the third generation bounded regions. If all the bounded regions constructed in this way share a boundary edge with the transversal labeled by $t_i$, then the process of constructing higher generation bounded regions will eventually terminate, because $t_i$ is a word of finite length. So, we can find a smallest positive number $j$, such that at least one of the $(j+1)$-st generation bounded region does not share any boundary edge with the transversal labeled by $t_i$. 

\begin{figure}[h!]
\centering
\includegraphics[trim = 0mm 0mm 0mm 0mm, clip,width=1.8in]{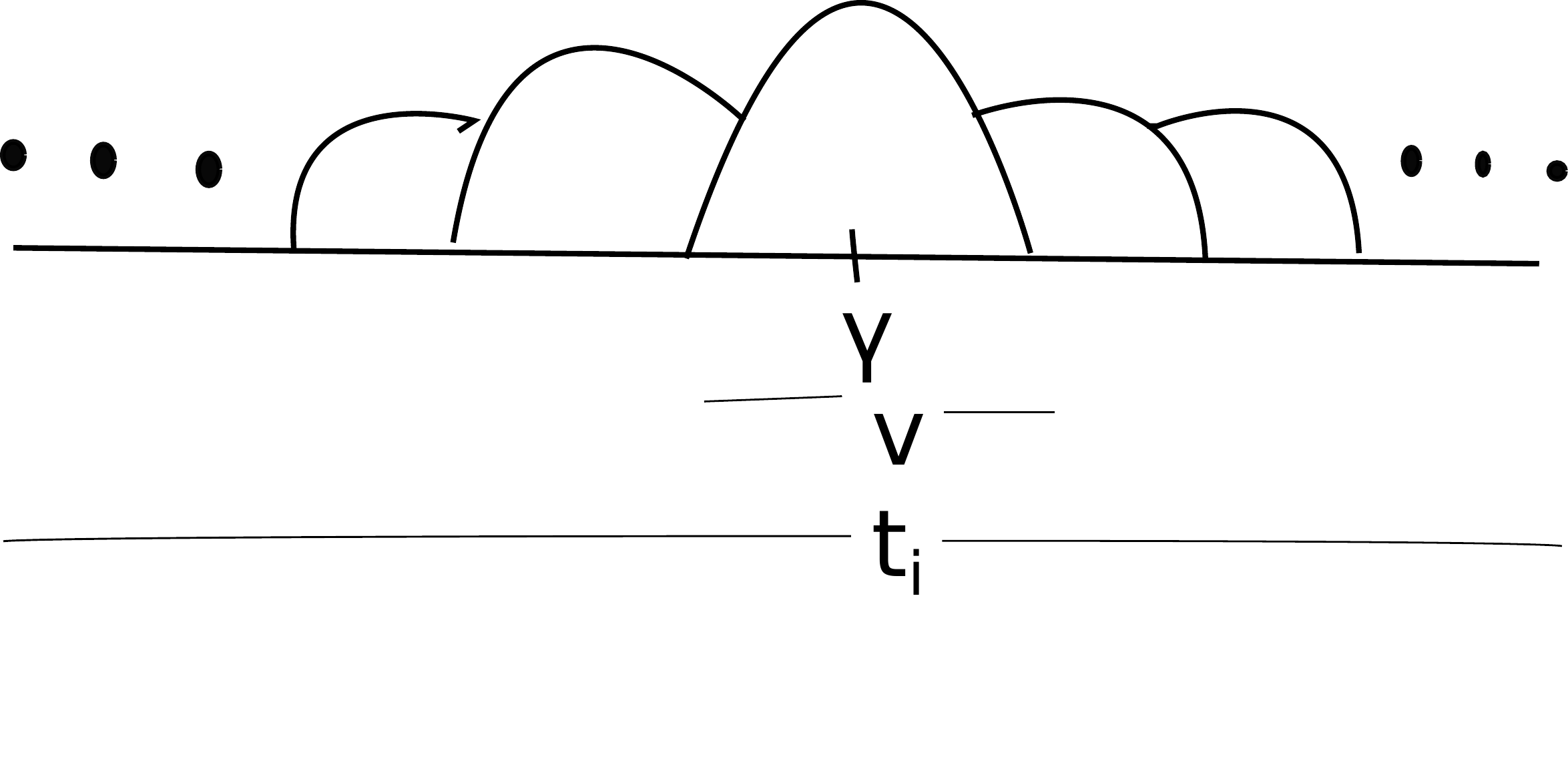}
\caption{ }
\label{fig301}
\end{figure}

We observe that at least one of the vertex between the source and the sink of the $j$-th region lies on the transversal labeled by $t_i$. Without loss of generality, we assume that the source of the source of $j$-th bounded region lies on the transversal labeled by $t_i$. As we know that at least one of the $(j+1)$st bounded region  doesn't share any boundary edge with the transversal $t_i$. The one side of this $(j+1)$st bounded region lies on the positively labeled segment that starts from the source vertex of the $j$-th bounded region, travels along the segment that was attached at the $j$-th step  and terminates at the sink vertex of the $(j-1)$st bounded region. We assume that, this positively labeled segment is labeled by $t$, where $t\equiv ($An $R$-word)(suffix of an $R$-word). 

The segment labeled by $t$ contains exactly two overlapping sub-segments labeled by some $R$-words, one sub-segment is one side of the $j$-th bounded region, and the other sub-segment is one side of the $(j+1)$st bounded region. We observe that the initial vertex of the one side of the $(j+1)$st bounded region lies after the initial vertex of $t$ and the terminal vertex of the one side of the $(j+1)$st bounded regions lies before the terminal vertex of $t$, because the $Star^I(\rho)$ and $Star^O(\rho)$, for some vertex $\rho$ of $S\Gamma(t_i)$ over an Adian presentation $\langle X|u=v\rangle$, can't contain more than two elements.  The out-star set of the initial vertex of $t$ and the in-star set of the terminal vertex of $t$, being the source and sink vertices of some bounded regions, already contain two elements.


We sew on a segment labelled by the other side of the relation from the initial vertex to the terminal vertex of the segment labeled by one side of the $(j+1$)st bounded region, at $t$. According lo our hypothesis, the process of constructing higher and higher generation bounded regions never terminates, therefore, we can find at least one unsaturated segment that either starts from a vertex of $t$ and terminates at an interior vertex of  the newly attached segment, or it starts from an interior vertex of the newly attached segment and terminates at a vertex of $t$. These unsaturated segment(s) label one side of the $(j+2)$nd bounded regions.

If the initial vertex of one side of the $(j+2)$nd bounded region lies before the initial vertex of the one side of the $(j+1)$st bounded region at $t$, then the initial vertex of one side of the $(j+2)$nd bounded region does not coincide with the initial vertex of $t$, because the out-star set of the initial vertex of $t$ already contains two elements.  Similarly, if the terminal vertex of the one side of the $(j+2)$nd bounded region lies after the terminal vertex of  the one side of the $(j+1)$st bounded region at $t$, then the terminal vertex of one side of the $(j+2)$nd bounded region lies before the terminal vertex of $t$, because the in-star set of the terminal vertex of $t$ already contains two elements. We sew on the segment(s) labeled by the other side of the relation from the initial vertex to the terminal vertex of unsaturated segments created in the previous step and obtain some $(j+2)$nd bounded region(s). This step creates some new unsaturated segments for the next generation bounded regions, by our hypothesis. So, it turns out that $S\Gamma(t)$ embeds in $S\Gamma(t_i)$ and $S\Gamma(t)$ is infinite. The initial vertex of $t$ is not a  source vertex of any of $(j+1))$st or higher generation bounded region, so, we can factorize $t$ as $at'$, where $a\in X$. Note that $t'$ is a positive word that contains only one $R$-word as its subword and this $R$-word appear only once in $t'$. also $S\Gamma(t')$ is infinite. This contradicts the hypothesis of the theorem.

We observed that $S\Gamma(t_i)$, for all $i\in I$, are finite. We know that $S\Gamma(t_i)$ embeds in $S\Gamma(w)$,  and no two edges get identified with each other in the construction of $S\Gamma(w)$, over the Adian presentation $\langle X|u=v\rangle$, by Lemma \ref{1}. We start the expansion of $S_0$ from the unsaturated segments, that are labeled by the $R$-words and pass through the vertex $\gamma$.  We sew on segments labeled by the other side of the relation from the initial vertex to the terminal vertex of the unsaturated segments labeled by  the $R$-words, This step may create some new unsaturated  segments labeled by the $R$-words. So, we continue the process of expansion of $S_0$ along the transversals labeled by $t_i$'s. Basically, we have been constructing the Sch\"{u}tzenberger graphs of $t_i$'s along the corresponding transversals of $S_0$.  Since $S\Gamma(t_i)$ are finite for all $i\in I$, therefore, the expansion of $S_0$ eventually terminates. Moreover, the star set of a vertex of $S\Gamma(w)$, over the Adian presentation $\langle X|u=v\rangle$, can't contain more than four elements and the star sets of some of the vertices of $S_0$ (a subgraph of $S\Gamma(w)$)  is increasing as a consequence of this expansion process. So,  we'll obtain a finite graph $S_1$ that will have no unsaturated  segment labeled by the $R$-words. This finite graph is a Sch\'{u}tzenberger graph of the word $w$.

\end{proof}

\section{Applications}

\begin{proposition} Let $\langle X|u=v\rangle$ be an Adian presentation such that $u\equiv xsx$ or/and $v\equiv yty$ for some $x,y\in X^+$ and $s,t\in X^*$, where $x$ and $y$ are the maximal prefixes of $u$ and $v$ that are also the suffixes of $u$ and $v$, and there is no overlap between $u$ and $v$. Then $S\Gamma(w)$, for all $w\in X^+$, is finite. 
 
\end{proposition}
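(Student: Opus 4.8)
I plan to deduce the proposition from the Main Theorem proved above: it suffices to show that $S\Gamma(w)$ is finite for every positive word $w$ that contains an $R$-word exactly once as a subword. Before doing that I would record that under the present hypotheses neither $u$ nor $v$ is a subword of the other; otherwise the Schützenberger graph of the shorter $R$-word is already infinite, since sewing on the longer side of the relation re-creates the shorter $R$-word inside the newly attached segment and the expansion never terminates, and this would contradict the very finiteness we are about to establish. So I would fix a positive word $w$ whose unique $R$-subword is, without loss of generality, a factor equal to $u$ (a factor equal to $v$ being handled symmetrically after interchanging the roles of the subcases $u\equiv xsx$ and $v\equiv yty$).

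Next I would form the linear automaton $(\alpha_0,\Gamma_0(w),\beta_0)$ and perform the elementary $\mathscr{P}$-expansion that sews a path labeled by $v$ across the segment labeled by $u$, creating a first generation bounded region; by Lemma \ref{1} no edges ever fold, so the graph stays directional in the sense of Proposition 3(iii) of \cite{AD}, and the only place new unsaturated $R$-segments can appear is where an attached segment abuts the rest of the transversal. From here the argument is exactly Case~1 of the proof of the Main Theorem, with the distinguished vertex $\gamma$ there replaced by this seam: since $u$ and $v$ do not overlap, no $u$--$v$ interaction produces a new unsaturated $R$-segment, so each new one arises only from the self-overlaps $u\equiv xsx$ and/or $v\equiv yty$. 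I would split, as there, into: (a) only $u\equiv xsx$, with $v$ having no proper prefix equal to a suffix and not overlapping $u$, where sewing $v$ creates nothing and each subsequent $u$-overlap consumes a fresh block $xs$ or $sx$ of letters of $w$; (b) both $u\equiv xsx$ and $v\equiv yty$ with $s\not\equiv t$, where again every elementary expansion consumes letters of the finite word $w$; and (c) both with $s\equiv t$, where the subgraph generated by the attached segment grows as a pyramid whose base layer along the transversal contains at most $|w|$ bounded regions and each higher layer contains two fewer. In each subcase the expansion terminates after finitely many steps.

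Consequently the expanded graph closes after finitely many $\mathscr{P}$-expansions; since no folding occurs and the star set of any vertex of a Schützenberger graph over an Adian presentation has at most four elements, this closed graph is finite, and by Theorem \ref{closure} it is $S\Gamma(w)$. Thus $S\Gamma(w)$ is finite for every positive word $w$ containing an $R$-word exactly once, and the Main Theorem then yields finiteness of $S\Gamma(w)$ for all $w\in X^+$. I expect the only delicate point to be subcase (c): one must check that the layers of bounded regions are genuinely nested, that moving to the next layer strictly drops the count by two, and that no bounded region escapes the pyramid — which holds because the out-star and in-star of the source and sink vertices involved already carry two elements — but this is precisely the bookkeeping already carried out in Case~1, here simplified by the presence of a single occurrence of an $R$-word in $w$.
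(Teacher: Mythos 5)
Your proposal is correct and follows essentially the same route as the paper: reduce via the Main Theorem to positive words containing a single occurrence of an $R$-word, then run the Case~1 analysis of that theorem's proof (sew across the unique $R$-segment, use the block-consumption argument when $s\not\equiv t$ and the pyramid argument when $s\equiv t$). The only quibble is that your opening remark excluding subword containment is circular as stated (you rule it out by appeal to the finiteness you are in the middle of proving); it should instead be read directly into the hypothesis that $u$ and $v$ do not overlap, but this does not affect the argument.
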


\begin{proof}  We just need to show that the Sch\"{u}tzenberger graph of every positive word that contains an $R$-word only once, is finite. First, we assume that $u\equiv xsx$ for some $x\in X^+$ and $s\in X^*$ (i.e., $u$ has same prefix and suffix), whereas $v$ is any positive word such that no proper prefix of $v$ is a suffix of $v$ and $v$ does not overlap with $u$ either. Let $w$ be any positive word that contains an $R$-word only once as it's subword. We show that the Sch\"{u}tzenberger graph of $w$ is finite. We consider the linear automaton $(\alpha_0,\Gamma_0(w),\beta_0)$.
 
 If $w$ contains $u$ as it's subword then we sew on a path labeled by $v$, from the initial vertex to the terminal vertex of the segment labeled by $u$  of  $(\alpha_0,\Gamma_0(w),\beta_0)$, and obtain $(\alpha_1,\Gamma_1(w),\beta_1)$. Since $v$ does not overlap with itself or with $u$ , therefore we can't find an unsaturated segment labeled by an $R$-word, where we can sew on a new path labeled by an $R$-word. So, $S\Gamma(w)$ remains finite.

 If $w$ contains $v$ as it's subword then we sew on a path labeled by $u$ from the initial vertex to the terminal vertex of the segment labeled by $v$ of  $(\alpha_0,\Gamma_0(w),\beta_0)$ to obtain  $(\alpha_1,\Gamma_1(w),\beta_1)$. Since $u$ overlaps with itself, therefore if the segment labeled by $v$ is followed by a segment labeled by $xs$ or a segment labeled by $sx$ is followed by $v$, in the automaton  $(\alpha_1,\Gamma_1(w),\beta_1)$, then we can find an unsaturated segment labeled by $u$ in $(\alpha_1,\Gamma_1(w),\beta_1)$  where we can attach a new segment labeled by $v$ from the initial vertex to the terminal vertex of the unsaturated segment(s) to obtain $(\alpha_2,\Gamma_2(w),\beta_2)$.  Otherwise the underlying graph of$(\alpha_1,\Gamma_1(w),\beta_1)$ is $S\Gamma(w)$.  The $R$-word $v$ does't overlap with itself or with $u$, therefore we can't find an unsaturated segment labeled by an $R$-word in $(\alpha_2,\Gamma_2(w),\beta_2)$ where we can attach a new path labeled by the other $R$-word to obtain $(\alpha_3,\Gamma_3(w),\beta_3)$. So, $S\Gamma(w)$ is finite in this case.

 If $u\equiv xsx$ and $v\equiv yty$ for some $x,y\in X^+$ and $s,t\in X^*$, while $u$ and $v$ do not overlap with each other. Note that $x$ and $y$ cannot be the same words otherwise the presentation $\langle X|u=v\rangle$ will not be an Adian presentation. However, $s$ and $t$ can be the same words. We show that $S\Gamma(w)$ remains finite in both the cases, when $s\equiv t$ and when $s\not\equiv t$.

 We first assume that $s\not\equiv t$. The word $w$ contains an $R$-word as it's subword, so without loss of generality we assume that $w$ contains $v$ as it's subword and show that $S\Gamma(w)$ is finite. A similar argument can be used to show that $S\Gamma(w)$ is finite when $u$ is the subword of $w$.  We construct the linear automaton of $w$, $(\alpha_0,\Gamma_0(w),\beta_0)$. We sew on a segment labeled by $u$ from the initial vertex to the terminal vertex of the segment labeled by $v$ and obtain $(\alpha_1,\Gamma_1(w),\beta_1)$.  If the word $v$ is followed by the word $xs$ or the word $sx$ is followed by $v$ in $w$, then we can find at most two unsaturated segments labeled by$u$ in $(\alpha_1,\Gamma_1(w),\beta_1)$. So,  we sew on a segment labeled by $v$ from the initial vertex to the terminal vertex of the unsaturated segments labeled by $u$ and obtain $(\alpha_2,\Gamma_2(w),\beta_2)$. If the word $xsv$ is followed by the word $yt$ or the the word $ty$ is followed by $vsx$, then we can find some unsaturated segments labeled by $v$ in $(\alpha_2,\Gamma_2(w),\beta_2)$. So, once again we apply full $P$-expansion to obtain $(\alpha_3,\Gamma_3(w),\beta_3)$. We observe that every time we apply the  full $P$-expansion, we utilize  some of the segments of the linear automaton of $w$.  The process of application of elementary expansion will eventually  terminate, as $w$ is a word of finite length. So, $S\Gamma(w)$ is finite in this case. 

Now we assume that $s\equiv t$ and $w$ contains $v$ as it's subword and show that $S\Gamma(w)$ is finite. We consider the liner automaton of $w$, $(\alpha_0,\Gamma_0(w),\beta_0)$. We sew on a segment labeled by $u$ from the initial vertex to the terminal vertex of the segment labeled by $v$ and obtain $(\alpha_1,\Gamma_1(w),\beta_1)$. If the word $v$ is followed by the word $xs$ or the word $sx$ is followed by $v$ in $w$, then we can find at most two unsaturated segments labeled by $u$ in $(\alpha_1,\Gamma_1(w),\beta_1)$. So,  we sew on a segment labeled by $v$ from the initial vertex to the terminal vertex of the unsaturated segments labeled by $u$ and obtain $(\alpha_2,\Gamma_2(w),\beta_2)$. if the word $xsvsx$ is a subword of $w$, then we sew on the segments labeled by $v$ on both sides of the bounded region (created in the first step) and this creates an unsaturated segment labeled by $v$ starting from an interior vertex of the segment labeled by $v$ (that was just attached) and terminating the other segment labeled by $v$ (that was just attached). Now, if the word $xsv$ is followed by $ys$, or the word $sy$ is followed by $vsx$ in $w$, or the word $ysxsvsxsy$ is a subword of $w$ then we apply full $P$-expansion to obtain  $(\alpha_3,\Gamma_3(w),\beta_3)$. We continuously apply the procedure of full $P$-expansion to obtain the higher generation approximate graphs and observe that the Sch\"{u}tzenberger graph of $w$ grows in a pyramid shape. Since $w$ is a word of finite length, therefore, the base of this pyramid stops growing after a finite number of steps. 
We also observe that if the base of the pyramid contain $N$ number of bounded regions then the second layer contains $(N-2)$ bounded regions, the third layer contains $(N-4)$ bounded regions and son and so forth. Hence $S\Gamma(w)$ is finite in this case.

\end{proof}

\begin{proposition} 
Let $\langle X|u=v\rangle$ be an Adian presentation such that $u\equiv xy$ and $v\equiv yz$ or $v\equiv zx$, for some $x,y,z\in X^+$. (If $v\equiv yz$ then $y$ is the maximal suffix of $u$ that is a prefix of $v$ and no prefix of $u$ is a suffix of $v$. If $v\equiv zx$ then $x$ is the maximal prefix of $u$ that is a suffix of $v$ and no suffix of $u$ is a prefix of $v$). Then $S\Gamma(w)$, for all $w\in X^+$, is finite. 

\end{proposition}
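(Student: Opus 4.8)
By the main theorem proved above, it suffices to show that $S\Gamma(w)$ is finite for every positive word $w$ that contains an $R$-word exactly once as a subword; the finiteness of $S\Gamma(w)$ for \emph{all} $w\in X^{+}$ then follows. So fix such a $w$. It contains a single occurrence of exactly one of $u,v$, and by interchanging the roles of the two relator words we may assume $w$ contains a single occurrence of $v$ (the case of $u$ is symmetric). The plan is to run Stephen's iterative procedure starting from the linear automaton $(\alpha_0,\Gamma_0(w),\beta_0)$, keep track of the bounded regions created, and argue that only finitely many of them ever appear because each one must share an edge with the (finite) transversal labeled by $w$. This is essentially Case~2 of the proof of the main theorem, now applied to the single word $w$ rather than to the auxiliary transversal words $t_i$.

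First I would perform the elementary $\mathscr{P}$-expansion that sews a path labeled $u$ from the initial vertex to the terminal vertex of the unique segment labeled $v$, producing the first generation bounded region; by Lemma~\ref{1} no edges are ever folded, and by Proposition~3(iii) of \cite{AD} the graph stays directional, so every vertex lies on a transversal. Next I would determine exactly when this creates a new unsaturated segment labeled by an $R$-word. In the subcase $v\equiv yz$, where $y$ is the maximal suffix of $u$ that is a prefix of $v$ and no prefix of $u$ is a suffix of $v$, the only overlap available between the relator words is the word $y$; consequently a new unsaturated $R$-word segment can arise only when the letters of $w$ immediately surrounding the sewn-in region spell out a prescribed pattern (the $v$-segment being immediately preceded by $z$, then by $zz$, and so on), and in that case the new unsaturated segment is again labeled $v$, runs from a vertex of the transversal to an interior vertex of the previously attached segment labeled $u$, and the subsequent $\mathscr{P}$-expansion yields a next-generation bounded region one of whose sides is a subpath of the transversal. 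Iterating, every bounded region produced in this cascade shares a boundary edge with the transversal labeled by $w$; since $w$ has finite length there are only finitely many such regions, the process terminates in a closed graph, which by Theorem~\ref{closure} is $S\Gamma(w)$, and it is finite. The hypothesis ``no prefix of $u$ is a suffix of $v$'' is what prevents a second family of unsaturated segments from appearing on the other side of the bigon, so the cascade cannot detach from the transversal; this is the step I expect to require the most care, since it is precisely where one must rule out the ``free'' bounded regions (not sharing an edge with the transversal) that produce infiniteness in Case~4 of the main theorem.

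For the remaining subcase $v\equiv zx$, where $x$ is the maximal prefix of $u$ that is a suffix of $v$ and no suffix of $u$ is a prefix of $v$, I would observe that this is the mirror image of the first subcase under reversal of words, which carries Adian presentations to Adian presentations and carries the Sch\"{u}tzenberger graph of a word to that of its reversal (with the two roots interchanged); hence the argument above applies verbatim to the reversed presentation, and $S\Gamma(w)$ is finite for every $w\in X^{+}$ in this subcase as well. This completes the argument.
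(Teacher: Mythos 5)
Your reduction to words containing a single occurrence of an $R$-word via the main theorem, your cascade-of-bounded-regions strategy, and your treatment of the subcase $v\equiv zx$ by word reversal all match the paper. The genuine gap is the sentence claiming that ``the only overlap available between the relator words is the word $y$.'' The hypotheses of the proposition forbid a prefix of $u$ from being a suffix of $v$ (and the mirror condition in the other subcase), but they do \emph{not} forbid $u$ or $v$ from overlapping with itself, i.e.\ from having the form $x_1sx_1$ with $x_1\in X^+$, $s\in X^*$. For instance $u\equiv aba$, $v\equiv bac$ satisfies every hypothesis of the proposition ($y\equiv ba$, $z\equiv c$, no prefix of $u$ is a suffix of $v$, and the left and right graphs are single edges), yet $u$ overlaps with itself via the letter $a$. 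When a relator self-overlaps, sewing $u$ onto the $v$-segment can create unsaturated segments labeled $u$ on \emph{both} sides of the first bounded region, not only the single $v$-labeled cascade you describe; one must then rule out the simultaneous appearance of a $u$-labeled and a $v$-labeled unsaturated segment on the same side (this is exactly what the paper's ``Fact 2'' is for) before one can conclude that the expansion stays anchored to the transversal.

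Moreover, in the sub-subcase $u\equiv x_1sx_1$, $v\equiv y_1ty_1$ with $s\equiv t$, your key invariant --- ``every bounded region produced in this cascade shares a boundary edge with the transversal labeled by $w$'' --- actually fails: higher-generation regions sit entirely on previously sewn-in segments, and the paper has to replace that invariant by a layer-counting (``pyramid'') argument in which a first layer of $N$ regions forces at most $N-2$ in the second layer, $N-4$ in the third, and so on. So your proof as written covers only Case 1 of the paper's proof (neither relator has equal prefix and suffix); Case 2, which occupies most of the paper's argument for this proposition, is missing.
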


\begin{proof} We show that the Sch\"{u}tzwnberger graph of every positive word that contains an $R$-word only once, is finite. Without loss of generality, we assume that $u\equiv xy$ and $v\equiv yz$, where $y$ is the maximal suffix of $u$ that is a prefix of $v$, no prefix of $u$ is a suffix of $v$. There are two cases to consider. 

\begin{enumerate}

\item None of the $R$-word has same prefix and suffix.

\item One or both of the $R$-words have same prefixes and suffixes. 

\end{enumerate}

Let $w\in X^+$ be a positive word that contains an $R$-word as it's subword only once. Without loss of generality we assume that $w$ contains $v$ as it's subword and show that $S\Gamma(w)$ is finite. A similar argument can be used to show that $S\Gamma(w)$ is finite, when $w$ contains $u$ as it's subword. 

\textit{Case 1:} We assume that none of the $R$-words have same prefix and suffix. We construct the linear automaton of $w$, $(\alpha_0,\Gamma_0(w),\beta_0)$.. We sew on a segment labeled by $u$ from the initial vertex to the terminal vertex of the segment labeled by $v$ and obtain $(\alpha_1,\Gamma_1(w),\beta_1)$. If the word $z$ is followed by $v$ in $w$, then we can find an unsaturated segment labeled by $v$ in $(\alpha_1,\Gamma_1(w),\beta_1)$. we apply full $P$-expansion and obtain $(\alpha_2,\Gamma_2(w),\beta_2)$.  If the word $z$ is followed by the word $vz$ in $w$, then we repeat the process of full $P$-expansion and obtain $(\alpha_3,\Gamma_3(w),\beta_3)$. We observe that the Sch\"{u}tzenberger graph of $w$ is only growing in one direction and at every iterative step of full $P$-expansion, it is utilizing  a segment of the transversal labeled by $w$. So, the sequence of approximate graphs of $w$ terminates after a finite number of steps, as $w$ is a word of finite length. 

\textit{Case 2:} First, we assume that $u\equiv x_1sx_1$ for some $x_1\in X^+$ and $s\in X^*$ (i.e., $u$ has same prefix and suffix), whereas $v$ is any positive word such that no proper prefix of $v$ is a suffix of $v$ and $v$ overlaps with $u$. 

We construct the linear automaton of $w$, $(\alpha_0,\Gamma_0(w),\beta_0)$. If $w$ contains $u$ as as it's subword, then we sew on a path labeled by $v$ from the initial vertex to the terminal vertex of the segment labeled by $u$ to obtain $(\alpha_1,\Gamma_1(w),\beta_1)$. If $u$ is followed by $x$ in $w$, then we can find an unsaturated segment labeled by $u$ in $(\alpha_1,\Gamma_1(w),\beta_1)$. We sew on a segment labeled by $v$ from the initial vertex to the terminal vertex of this unsaturated segment and obtain $(\alpha_2,\Gamma_2(w),\beta_2)$. If $xu$ is followed by $x$ in $w$ then we can find an unsaturated segment labeled by $u$ in  $(\alpha_2,\Gamma_2(w),\beta_2)$ and we apply Full $P$-expansion to obtain  $(\alpha_3,\Gamma_3(w),\beta_3)$.  We continue the process of Full $P$-expansion and observe that every bounded region shares a boundary segment with the transversal labeled by $w$. Since $w$ is a word of finite length, therefore the process of Full $P$-expansion eventually terminates. Hence $S\Gamma(w)$ remains finite in this case. 
 
 If $w$ contains $v$ as it's subword, then we sew on a segment labeled by $u$ from the initial vertex to the terminal vertex of the segment labeled by $v$ to obtain  $(\alpha_1,\Gamma_1(w),\beta_1)$.    The approximate graph $(\alpha_1,\Gamma_1(w),\beta_1)$ contains only one bounded region. On one side of this first generation bounded region, if the segment labeled by $v$ of $w$ is followed by a segment labeled by $x_1s$, then we can find an unsaturated segment labeled by $u$. So, we sew on a segment labeled by $v$ from the initial vertex to the terminal vertex of the unsaturated segment labeled by $u$ and obtain a second generation bounded region. If the segment labeled by $x_1sv$ of $w$ is followed by $x$, then we can find an unsaturated segment labeled by $u$. We sew on a segment labeled by $v$ from the initial vertex to the terminal vertex of the unsaturated segment labeled by $u$, and obtain a third generation bounded region. We continue this process and we observe that every higher generation bounded region shares a boundary edge with the transversal labeled by $w$. Since $w$ is a word of a finite length, therefore, this process eventually terminates on this side. 
  
  On the other side of the first generation bounded region, we can't find two unsaturated segments labeled by $u$ and $v$ simultaneously, because of the Fact 2. So, if the segment labeled by $sx_1$ appears right  after the segment labeled by $v$ in the transversal labeled by $w$, then we find an unsaturated segment labeled by $u$, and if the segment labeled by $z$ appears right after the segment labeled by $v$, then we can find an unsaturated segment labeled by $v$. If the unsaturated segment is labeled by $u$, then we sew on a segment labeled by $v$ from the initial vertex to the terminal vertex of the unsaturated segment  labeled by $u$ and obtain a second generation bounded region. In this case, the extension process terminates at this step as $v$ doesn't overlap with itself and no suffix of $v$ is a prefix of $u$ either. If the unsaturated segment is labeled by $v$ then we sew on a segment labeled by $u$ from the initial vertex to the terminal vertex of the unsaturated segment labeled by $v$. Once again, we have same situation as before, that is, if the sewing on the segment labeled by $u$ creates an unsaturated segment, then it will be either labeled by $u$ or by $v$, so we repeat the same steps. Note that every bounded region shares a boundary edge with the transversal labeled by $t_i$. Since $t_i$ is a word of finite length therefore, this process will eventually terminate on this side.

 Now we assume that $u\equiv x_1sx_1$, $v\equiv y_1ty_1$, for some $x_1,y_1\in X^+$, $s,t\in X^*$ and $s\not\equiv t$. There are two possibilities, either $s\equiv t$ or $s\not\equiv t$.  First, we assume that $s\not\equiv t$. In this case $x_1$ and $y$ can't be the same words, otherwise the presentation $\langle X|u=v\rangle$ will not be an Adian presentation. Without loss of generality, we assume that $w$ contains $v$ as it's subword. 
  
  We sew on a segment labeled by $u$ from the initial vertex to the terminal vertex of the segment labeled by $v$  to obtain  $(\alpha_1,\Gamma_1(w),\beta_1)$. This can create some new unsaturated segments labeled by an $R$-word in $(\alpha_1,\Gamma_1(w),\beta_1)$. On one side of the first generation bounded region, if a segment labeled by $x_1s$ appears  right before the segment labeled by $v$ in the transversal labeled by $w$, then we can find an unsaturated segment labeled by $u$. We sew on a segment labeled by $v$ from the initial vertex to the terminal vertex of the unsaturated segment labeled by $u$ and obtain a second generation bounded region.   If the segment labeled by $x$ appears right before the segment labeled by $x_1sv$ in the transversal labeled by $w$, then we find an unsaturated segment labeled by $ u$ that terminates at an interior vertex of the segment labeled by $v$ that was attached in the previous step. If the segment labeled by $y_1t$ appears right before the segment labeled by $x_1sv$ in the transversal labeled by $w$ then we find an unsaturated segment labeled by $v$ that terminates at an interior vertex of the segment labeled by $v$ that was attached in the previous step. We can't have both of the unsaturated segments ( one labeled by $u$ and the other one labeled by $v$, both terminating at the interior vertices of the segment labeled by $v$ that was attached in the previous step) together, because of the Fact $2$.  So, in either case, we sew on a segment labeled by the other side of the relation from the initial vertex to the terminal vertex of the unsaturated segment and obtain a third generation bounded region. We observe that every bounded region shares a boundary edge with the transversal labeled by $w$, but $w$ has a finite length, therefore, the extension process on this side of the first generation bounded region eventually terminates after a finite number of steps. Same argument can be used to show that the extension process of the graph terminates after a finite number of steps on the other side of the first generation bounded region. Hence $S\Gamma(w)$ is finite in this case.

 Finally,  we assume that $u\equiv x_1sx_1$, $v\equiv y_1ty_1$, for some $x_1,y_1\in X^+$, $s,t\in X^*$ and $s\equiv t$. Clearly $x_1$ and $y$ can't be the same words, otherwise the presentation $\langle X|u=v\rangle$ will not be an Adian presentation. 

 We observed that if $u\equiv x_1sx_1$, $v\equiv y_1ty_1$, for some $x_1,y_1\in X^+$, $s,t\in X^*$ and $s\equiv t$, then we can have multiple layers of bounded regions. The second layer of bounded regions contain fewer  bounded regions than the first layer. The third layer of bounded regions contains fewer bounded regions than the second layer, and so on. By using the same argument, as in the above paragraphs, we can show that the first layer of bounded regions contains only finite number of bounded regions. Hence $S\Gamma(w)$  is finite in this case.

\end{proof}

\textbf{Acknowledgement} The author of this paper is thankful to Robert Ruyle for his useful suggestions.

\bibliographystyle{plain}

\end{document}